\newcommand{\Int}{\mbox{\rm Int}}
\newtheorem{theorem}{Theorem}[section]
\newtheorem{lemma}[theorem]{Lemma}
\newtheorem{conjecture}[theorem]{Conjecture}
\newtheorem{claim}[theorem]{Claim}
\newtheorem{proposition}[theorem]{Proposition}
\newtheorem{remark}[theorem]{Remark}
\theoremstyle{definition}
\newtheorem{definition}{Definition}[section]
\theoremstyle{remark}
\def\square{\hfill${\vcenter{\vbox{\hrule height.4pt \hbox{\vrule
width.4pt height7pt \kern7pt \vrule width.4pt} \hrule height.4pt}}}$}
\newcommand{\SI}{\partial_\infty {\Bbb H}^2\times {\Bbb R}}
\newcommand{\si}{S^1_{\infty}}
\newcommand{\PI}{\partial_{\infty}}
\newcommand{\BH}{\Bbb H}
\newcommand{\BHH}{{\Bbb H}^2\times {\Bbb R}}
\newcommand{\BR}{\Bbb R}
\newcommand{\ff}{\frac{1}{2}}
\newcommand{\p}{\mathcal{P}}
\newcommand{\U}{\mathcal{U}}
\newcommand{\V}{\mathcal{V}}
\newcommand{\I}{\mathcal{I}}
\newcommand{\ov}{\overline}
\newcommand{\cF}{\mathcal{F}}
\newcommand{\cP}{\mathcal{P}}
\newcommand{\BZ}{\mathbb{Z}}
\newcommand{\ve}{\varepsilon}
\newcommand{\A}{\mathcal{A}}
\newcommand{\B}{\mathcal{B}}
\newcommand{\cL}{\mathcal{L}}
\newcommand{\wh}{\widehat}
\newcommand{\wt}{\widetilde}
\newcommand{\C}{\mathcal{C}}
\newcommand{\cC}{\mathcal{C}}
\newcommand{\N}{\mathbb{N}}
\newcommand{\R}{\mathbb{R}}
\newcommand{\rth}{\R^3}
\newcommand{\HH}{\mathbf{H}}
\newcommand{\ben}{\begin{enumerate}}
\newcommand{\bit}{\begin{itemize}}
\newcommand{\een}{\end{enumerate}}
\newcommand{\eit}{\end{itemize}}
\newcommand{\G}{\Gamma}
\newcommand{\ed}{\end{document}}
\begin{document}
\title{Non-properly Embedded $H$-Planes in $\BHH$}
\author{Baris Coskunuzer}
\thanks{The first author is partially supported by BAGEP award of the Science Academy, and a Royal Society Newton Mobility Grant.}
\author{William H. Meeks III}
\thanks{The second author was supported in part by NSF Grant DMS -
   1309236. Any opinions, findings, and conclusions or recommendations
   expressed in this publication are those of the authors and do not
   necessarily reflect the views of the NSF}
\author{Giuseppe Tinaglia}
\thanks{The third author was partially supported by EPSRC grant no. EP/M024512/1, and a Royal Society Newton Mobility Grant.}
\address{Department of Mathematics \\ Boston College \\ Chestnut Hill, MA 02467}
\email{coskunuz@bc.edu}
\address{Department of Mathematics \\ University of Massachusetts \\ Amherst, MA 01002}
\email{profmeeks@gmail.com}
\address{Department of Mathematics \\ King's College London, London}
\email{giuseppe.tinaglia@kcl.ac.uk}

\begin{abstract}
For any $H \in (0,\frac{1}{2})$, we construct  complete, non-proper,
stable, simply-connected surfaces  embedded in $\BHH$ with constant mean curvature $H$.
\end{abstract}

\keywords{Minimal surface, constant mean curvature, nonproperly embedded, Calabi-Yau Conjecture.}

\maketitle

\section{Introduction}
In their ground breaking work~\cite{cm35}, Colding and
Minicozzi proved that  complete minimal surfaces  embedded
in $ \rth$ with finite topology are proper.
 Based on the techniques in~\cite{cm35},  Meeks
and Rosenberg~\cite{mr13} then proved that
complete minimal surfaces with positive
injectivity embedded in $\mathbb{R}^3$ are proper.
More recently,  Meeks and Tinaglia~\cite{mt15} proved that  complete
 constant mean curvature surfaces  embedded in
$\rth$  are proper if they
have finite topology or have positive injectivity radius.


In contrast to the above  results, in this paper we prove the
following existence theorem for non-proper, complete,
simply-connected surfaces embedded in $\BHH$
with constant mean curvature $H\in (0,1/2)$. The convention used here
is that the mean curvature function of an oriented surface $M$ in an oriented
Riemannian three-manifold $N$
is the pointwise average of its principal curvatures.

The catenoids in $\BHH$ mentioned in the next theorem
are defined at the beginning of Section~\ref{Hcatenoid}.

\begin{theorem}\label{main} For any $H\in (0,1/2)$ there
exists a complete, stable, simply-connected surface $\Sigma_H$ embedded
in $\BHH$ with constant mean curvature $H$
satisfying the following properties:
\begin{enumerate}
\item The closure of $\Sigma_H$ is a lamination with three
leaves, $\Sigma_H$, $C_1$ and $C_2$,
where $C_1$ and  $C_2$
are stable catenoids of constant mean curvature $H$ in
$\mathbb{H}^3$ with the same axis of revolution $L$.
In particular, $\Sigma_{H}$ is not properly embedded in $\BHH$.
\item Let $K_L$ denote the  Killing field generated by
rotations around $L$. Every
integral curve of $K_L$ that lies in the region between
$C_1$ and $C_2$ intersects $\Sigma_H$
transversely in a single point.  In particular,
the closed region between
$C_1$ and $C_2$ is foliated by  surfaces   of constant mean curvature $H$,
where the leaves are
$C_1$ and $C_2$ and  the rotated images $\Sigma_H ({\theta})$ of
$\Sigma$ around $L$ by angle $\theta\in [0,2\pi)$.
\end{enumerate}
\end{theorem}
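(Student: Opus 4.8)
The plan is to produce $\Sigma_H$ as a graph for the rotational Killing field $K_L$ over the orbit space of the closed region $W$ lying between $C_1$ and $C_2$. Fix a totally geodesic half-plane $P$ with $\partial P=L$. Since the catenoids bounding $W$ are disjoint from $L$, its interior $\Int(W)$ is disjoint from $L$, the rotational flow $\{\phi_\theta\}$ generated by $K_L$ acts freely on $\Int(W)$, and $\Omega:=P\cap\Int(W)$ meets every orbit of $\{\phi_\theta\}$ inside $\Int(W)$ in exactly one point; thus $\Omega\cong\Int(W)/S^1$ is a simply connected planar domain, a strip, whose two boundary curves are the profile curves $\gamma_i:=P\cap C_i$. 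I would look for $\Sigma_H$ of the form $\{\phi_{u(q)}(q):q\in\Omega\}$ for a smooth function $u\colon\Omega\to\R$; such a graph meets every $K_L$-orbit in $\Int(W)$ transversely in a single point and is embedded and diffeomorphic to $\Omega$, so simple connectivity and the transversality statement in item (2) are automatic once $u$ is found. Because $K_L$ is a Killing field, the condition that the graph have constant mean curvature $H$ becomes a quasilinear elliptic equation $\mathcal{Q}(u)=2H$ on $\Omega$, of prescribed-mean-curvature type, with coefficients smooth up to $\overline\Omega$ (the Killing field does not vanish on $\overline W$) and invariant under $u\mapsto u+c$, since adding a constant to $u$ amounts to applying the ambient isometry $\phi_c$ to the graph.

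Non-properness forces $u$ to blow up at $\partial\Omega=\gamma_1\cup\gamma_2$, and the essential point is that this can happen only along profiles of rotationally invariant CMC $H$ surfaces: along an arc of $\partial\Omega$ where $u\to\infty$ the graph becomes tangent to $K_L$ and converges to the $\{\phi_\theta\}$-flowout of that arc, i.e.\ to the surface of revolution with that profile, which must then have mean curvature $H$. This is exactly why $C_1$ and $C_2$ must be CMC $H$ catenoids. So the construction reduces to a Jenkins--Serrin-type problem on the strip $\Omega$: solve $\mathcal{Q}(u)=2H$ with $u\to+\infty$ as $q\to\gamma_1$ and $u\to-\infty$ as $q\to\gamma_2$, where $\gamma_1,\gamma_2$ are the profiles of a suitably chosen disjoint pair of CMC $H$ catenoids with common axis $L$; the existence of such a pair, the region $W$ between them with $L\not\subset W$, and its foliation by the intermediate surfaces of revolution come from the description of the CMC $H$ catenoids in Section~\ref{Hcatenoid}.

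To solve this I would exhaust $\Omega$ by compact subdomains $\Omega_n$ and solve the Dirichlet problems $\mathcal{Q}(u_n)=2H$ on $\Omega_n$, with boundary values $n$ on the portion of $\partial\Omega_n$ near $\gamma_1$, $-n$ near $\gamma_2$, and interpolating on the compact transition arcs. Each graph $\Sigma_n=\{\phi_{u_n(q)}(q)\}$ is a Killing graph, so $\langle K_L,\nu_n\rangle$ is a nowhere-zero Jacobi field on it and $\Sigma_n$ is a stable $H$-surface; the interior curvature estimates for stable $H$-surfaces then bound $|A_{\Sigma_n}|$ uniformly on compact subsets of $\Int(W)$, so a subsequence of the $\Sigma_n$ converges to a CMC $H$ lamination $\mathcal{L}$ of $\Int(W)$. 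Barriers built from $C_1$, $C_2$ and the neighbouring surfaces of revolution, pushed against the $\Sigma_n$ via the maximum principle, force $u_n\to+\infty$ near $\gamma_1$ and $u_n\to-\infty$ near $\gamma_2$; this identifies $\mathcal{L}$ as a foliation of $\Int(W)$ whose leaves spiral from $C_1$ to $C_2$, and a leaf $\Sigma_H$ of it is a graph $\{\phi_{u(q)}(q):q\in\Omega\}$ with $u$ solving the desired problem. The remaining claims are then routine: $\Sigma_H$ is complete because it winds infinitely often as it approaches $C_1$ and $C_2$, so every divergent curve on it (running either to $C_1\cup C_2$ or out an end of $\Omega$) has infinite length; its closure is exactly $\Sigma_H\cup C_1\cup C_2$, since $u\to+\infty$ uniformly near $\gamma_1$ and $C_1$ is rotationally invariant, so every $K_L$-orbit of $C_1$ is a limit of sheets of $\Sigma_H$ (similarly for $C_2$), while $\Sigma_H$ is closed in $\Int(W)$ --- and this union is visibly a lamination with the three leaves $\Sigma_H,C_1,C_2$, so $\Sigma_H$ is not proper; the rotates $\Sigma_H(\theta)=\{\phi_{u(q)+\theta}(q)\}$ are the graphs of $u+\theta$, so exactly one of them passes through each point of $\Int(W)$, and together with $C_1,C_2$ they foliate $\overline W$ by CMC $H$ surfaces, which is item (2); and $\langle K_L,\nu\rangle$ is a positive Jacobi field on $\Sigma_H$ (nonvanishing by transversality), so $\Sigma_H$, like every leaf, is stable.

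The main obstacle I expect is this third step: showing simultaneously that $u_n$ genuinely blows up along $\gamma_1$ and $\gamma_2$ (rather than staying bounded, which would give only a surface with boundary on $C_1\cup C_2$) and that the limit lamination $\mathcal{L}$ is non-degenerate --- that it does not collapse onto $C_1\cup C_2$, and that one of its leaves is a single embedded plane spiralling onto both catenoids. This needs careful barrier constructions from the catenoids, and it is the one place where the precise geometry of the CMC $H$ catenoids of Section~\ref{Hcatenoid} --- in particular the existence of an appropriately ``balanced'' disjoint pair bounding a slab $W$ with $L\not\subset W$ --- is used essentially; the rest is the standard reduction of the CMC condition to a quasilinear PDE, standard compactness for stable $H$-surfaces, and maximum-principle bookkeeping.
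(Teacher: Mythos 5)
Your overall strategy is the same as the paper's, only written in the language of the orbit space rather than the universal cover: a Killing graph for $K_L$ over $P\cap\Int(W)$ is exactly a $\wt\theta$-graph in the universal cover $\wt\Omega$ of the solid torus between the two catenoids, and the paper likewise produces $\Sigma_H$ as a limit of compact stable Killing graphs, gets stability from the positive Jacobi function $\langle K_L,\nu\rangle$, and identifies the limit set with two rotational $H$-catenoids. But there are two genuine gaps. First, you assert solvability of the Dirichlet problems $\mathcal{Q}(u_n)=2H$ with boundary data $\pm n$; for a prescribed-mean-curvature equation with arbitrarily large boundary data, on a strip one of whose boundary curves is the profile of a catenoid whose mean curvature vector points \emph{out} of the region, this is not standard and you give no argument. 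The paper avoids it by minimizing the functional $\mathcal{I}_H=\mathrm{Area}+2H\,\mathrm{Volume}$ among integral currents in the compact pieces $\Delta_n$ with boundary the explicit curves $\Gamma_n$, and only afterwards proves the minimizer is a graph by sliding it with the translations $T_\alpha$ and invoking the mean curvature comparison principle (Lemma~3.5); it even has to remark that $\wt{\mathcal{C}}_1$ fails to be $H$-mean convex from the relevant side and that the orientation of the volume term is what makes it a usable barrier.

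Second --- and this is the decisive point, which you correctly flag as the main obstacle but do not resolve --- nothing in your setup prevents the limit from degenerating onto rotationally invariant leaves. The region between the two catenoids is filled by the intermediate rotational $H$-catenoids $\mathcal{C}^H_d$, $d_1<d<d_2$; each one is itself a limit of Killing graphs (a divergence line of the $u_n$), so the $\Sigma_n$ could converge to a lamination all of whose leaves are $\theta$-invariant, leaving no graph $u$ at all. Barriers built from $C_1$, $C_2$ and the ``neighbouring surfaces of revolution'' cannot exclude this, because those neighbouring surfaces are precisely the degenerate limits that must be excluded. The paper's device is the pair of infinite bumps $\mathcal{B}^\pm$: strips cut from horizontally translated copies of the catenoids, positioned so that every rotationally invariant surface in $\Omega$ must cross $\mathcal{B}^+\cup\mathcal{B}^-$ (Remark~3.3) while the $\Sigma_n$ avoid them by construction; a linking-number/parity argument for an arc $\beta$ joining $\mathcal{B}^+$ to $\mathcal{B}^-$ then yields a leaf meeting $\beta$ an odd number of times, hence meeting every vertical segment and not $\theta$-invariant. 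You need this, or an equivalent mechanism, for your third step. A smaller overclaim: you take the blow-up to occur exactly along $\gamma_1$ and $\gamma_2$, i.e.\ the limit catenoids to be the originally chosen pair; the paper only obtains \emph{some} pair of $H$-catenoids $L_1,L_2$ between them as the limit set (Remark~4.6), and that is all the theorem requires.
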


When $H=0$,
Rodr\'{\i}guez and Tinaglia~\cite{rodt1} constructed non-proper, complete
minimal planes embedded in $\BHH$.
However, their construction does  not generalize to
produce complete, non-proper planes embedded in $\BHH$
with non-zero constant mean curvature.
Instead, the construction presented in this paper is related to
the techniques developed by the authors in~\cite{cmt1}  to obtain
examples of  non-proper, stable, complete planes
embedded in $\mathbb H^3$ with constant mean curvature $H$, for any $H\in[0,1)$.

There is a general conjecture related to Theorem~\ref{main}
and the previously stated positive properness results.
Given $X$ a Riemannian three-manifold, let $\mbox{\rm Ch}(X):= \inf_{S\in\mathcal{S}} \frac{\text{Area}(\partial S)}{\text{Volume}(S)},$
where $\mathcal{S}$ is the set of all smooth compact domains in $X$. Note that when the volume of $X$ is infinite, $\mbox{\rm Ch}(X)$ is the Cheeger constant.

\begin{conjecture} \label{conj1.2} Let $X$ be a simply-connected,
homogeneous three-manifold.  Then for
any $H\geq\frac12\mbox{\rm Ch}(X)$, every  complete, connected
$H$-surface embedded in $X$ with positive injectivity
radius or  finite topology  is proper. On the other hand,
if $\mbox{\rm Ch}(X)>0$, then there exist non-proper complete
$H$-planes in $X$ for every $H\in [0,\frac12 \mbox{\rm Ch}(X))$.
\end{conjecture}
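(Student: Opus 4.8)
The plan is to construct $\Sigma_H$ as a Killing graph, with respect to the rotational Killing field $K_L$, over a ``slab'' bounded by two $H$-catenoids, in the spirit of the construction of the authors in~\cite{cmt1} for $\BH^3$ and of Rodr\'iguez--Tinaglia in~\cite{rodt1} for the minimal case.

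\textbf{Step 1 (the slab and its quotient).} Fix $H\in(0,1/2)$ and a vertical geodesic $L$. Using the $H$-catenoids described in Section~\ref{Hcatenoid}, I would choose two disjoint ones, $C_1$ and $C_2$, with common axis $L$, that together bound a closed region $W\subset\BHH$ disjoint from $L$, such that the mean curvature vectors of $C_1$ and $C_2$ along $\partial W$ point out of $W$ (so $W$ is mean-convex, hence a barrier region for $H$-surfaces), and such that $\Int(W)$ is foliated by the circular orbits of $K_L$; the availability of such a nested pair, and the mean-convexity of $W$, are what the subcriticality $H<1/2$ provides, and this is where Section~\ref{Hcatenoid} enters. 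Let $B:=W/K_L$ be the orbit space, a surface with boundary diffeomorphic to $\R\times[0,1]$, with $\partial B=\gamma_1\sqcup\gamma_2$ the profile curves of $C_1$ and $C_2$; for $s\in\Int(B)$ let $\mathcal{O}_s\subset\Int(W)$ be the corresponding orbit circle, fix a continuous choice of basepoints $p_s\in\mathcal{O}_s$, and let $\phi_t$ denote the flow of $K_L$.

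\textbf{Step 2 (compact approximants and a limit).} For $u\colon\Int(B)\to\R$ set $\mathrm{Graph}(u):=\{\phi_{u(s)}(p_s):s\in\Int(B)\}$; this is an embedded surface transverse to $K_L$ meeting each $\mathcal{O}_s$ exactly once, and it has constant mean curvature $H$ for a suitable orientation precisely when $u$ solves a quasilinear elliptic equation $\mathcal{Q}_H[u]=0$, the $H$-Killing-graph equation for $K_L$ in $\BHH$. I would exhaust $\Int(B)$ by smooth compact domains $\Omega_1\subset\Omega_2\subset\cdots$ (chosen mean-convex enough that the Dirichlet problem for $\mathcal{Q}_H$ is solvable on them, which is possible since $H<1/2$), prescribe on $\partial\Omega_n$ continuous data $\varphi_n$ that rises to the value $n$ along the part of $\partial\Omega_n$ approaching $\gamma_1$ and falls to $-n$ along the part approaching $\gamma_2$, and solve $\mathcal{Q}_H[u_n]=0$ on $\Omega_n$, $u_n|_{\partial\Omega_n}=\varphi_n$; this yields compact embedded $H$-disks $\Sigma_n:=\mathrm{Graph}(u_n)\subset\Int(W)$. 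Interior gradient estimates for $\mathcal{Q}_H$ in $\BHH$ of Jenkins--Serrin/Spruck type give, on each compact $K\subset\Int(B)$, a bound $\sup_K|\nabla u_n|\le C_K$ independent of $n$; by Schauder theory a subsequence then converges in $C^\infty_{\mathrm{loc}}(\Int(B))$ to a solution $u$, and $\Sigma_H:=\mathrm{Graph}(u)$ is a complete (its induced metric blows up toward $\partial B$, and $\BHH$ is complete), simply-connected ($\Sigma_H\cong\Int(B)\cong\R^2$), embedded $H$-surface contained in $\Int(W)$ and transverse to $K_L$.

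\textbf{Step 3 (infinite boundary values, and property (1)).} The heart of the matter is to show that $u$ does not stay bounded, but instead $u\to+\infty$ along $\gamma_1$ and $u\to-\infty$ along $\gamma_2$. This is a Jenkins--Serrin-type solvability statement for $H$-Killing graphs over the strip $B$ with the prescribed infinite boundary data: one constructs ``Scherk-type'' local sub- and supersolutions near $\gamma_1$ and $\gamma_2$ that blow up at the correct rate (their existence is again a consequence of $H<1/2$), and uses the monotone choice of the $\varphi_n$ to trap $u_n$ above, respectively below, these barriers near the boundary, uniformly in $n$, so that the trapping passes to the limit. Granting this, any point of $\Int(W)$ that is a limit of a sequence in $\Sigma_H$ must already lie on $\Sigma_H$ (local graph property), while for every $q\in C_1$ the function $u$ is unbounded on every $\Int(B)$-neighborhood of the point $[q]\in\gamma_1$, so $\phi_{u(s)}(p_s)$ comes arbitrarily close to every point of the $K_L$-orbit through $q$; hence $\overline{\Sigma_H}=\Sigma_H\cup C_1\cup C_2$, which is a lamination with three leaves, and in particular $\Sigma_H$ is not properly embedded. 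This gives~(1) apart from stability.

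\textbf{Step 4 (the foliation and stability).} For $\theta\in\R$ we have $\Sigma_H(\theta)=\mathrm{Graph}(u+\theta)$, and over each $s\in\Int(B)$ the point $\phi_{u(s)+\theta}(p_s)$ runs exactly once around $\mathcal{O}_s$ as $\theta$ runs over $[0,2\pi)$; thus $\{\Sigma_H(\theta)\}_{\theta\in[0,2\pi)}$ partitions $\Int(W)$, each $\Sigma_H(\theta)$ meeting each $K_L$-orbit in $\Int(W)$ transversally in a single point, and adjoining $C_1,C_2$ yields a foliation of $W$ by $H$-surfaces; this is~(2). A foliation by surfaces of the same constant mean curvature $H$, coherently oriented, carries a transverse vector field whose normal component restricts to a nowhere-vanishing, hence positive, Jacobi field on each leaf, so every leaf is stable; in particular $\Sigma_H$ is stable, and so are $C_1$ and $C_2$ (whose stability also follows from their being limit leaves of the $H$-lamination $\overline{\Sigma_H}$). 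This completes the proof. The main obstacle is Step~3: showing that the compact $H$-Killing graphs really converge to a graph attaining the prescribed \emph{infinite} boundary values along $\gamma_1$ and $\gamma_2$ — rather than degenerating, staying bounded, or converging to one of the catenoids — which requires the right blow-up barriers near the profile curves, and is precisely the point at which $H<1/2$ is indispensable.
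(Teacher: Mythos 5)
The statement you were asked to prove is Conjecture~\ref{conj1.2}, and the paper offers no proof of it: it is stated as an open conjecture about arbitrary simply-connected homogeneous three-manifolds $X$, with the remark that it is known for $X=\rth$ by~\cite{cm35} and for $\mathbb{H}^3$ by work in progress in~\cite{mt11}, and that Theorem~\ref{main} (together with~\cite{rodt1}) would make it sharp for $X=\BHH$. What you have written is instead a proof sketch of Theorem~\ref{main}. Even if every step of your sketch were carried out, it would establish only one instance of the second assertion of the conjecture --- the existence of non-proper complete $H$-planes --- for the single space $X=\BHH$, where $\frac12\mbox{\rm Ch}(X)=\frac12$, and for $H\in(0,1/2)$. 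It says nothing about the first assertion (that for $H\geq\frac12\mbox{\rm Ch}(X)$ every complete, connected, embedded $H$-surface with positive injectivity radius or finite topology is proper), nothing about other homogeneous spaces, and nothing that would identify $\frac12\mbox{\rm Ch}(X)$ as the correct threshold in general. So there is no proof here to compare with the paper's, because neither you nor the paper proves the statement.

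For what it is worth, your sketch of the existence construction in $\BHH$ also departs from the paper's actual proof of Theorem~\ref{main}: you propose solving the quasilinear Killing-graph equation $\mathcal{Q}_H[u_n]=0$ with divergent Dirichlet data and passing to the limit via interior gradient estimates and Scherk-type barriers (a Jenkins--Serrin scheme), whereas the paper works in the universal cover $\wt{\Omega}$ of the solid torus between the two catenoids, solves an $H$-Plateau problem by minimizing $\mbox{\rm Area}+2H\,\mbox{\rm Volume}$ over a compact exhaustion, proves graphicality a posteriori by the maximum principle (Lemma~\ref{translation}), and uses the ``infinite bumps'' $\B^{\pm}$ together with a linking argument to prevent the limit lamination from being empty or rotationally invariant. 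Your Step~3 --- solvability with infinite boundary data on the strip, including the required blow-up barriers for the $H$-graph equation along the profile curves of the catenoids --- is exactly the part you flag as the main obstacle, and it is not established by anything in the paper; the paper's barrier and lamination-limit argument is precisely how it avoids having to prove such a Jenkins--Serrin statement.
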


By the work in~\cite{cm35},
Conjecture~\ref{conj1.2} holds for $X=\rth$ and it  holds in $\mathbb{H}^3$ by work in progress
in~\cite{mt11}. Since the Cheeger constant of
$\BHH$ is $1$, Conjecture~\ref{conj1.2} would imply that Theorem~\ref{main}
 (together with the existence of complete non-proper minimal
planes embedded in $\BHH$ found in~\cite{rodt1}) is a sharp result.

\section{Preliminaries} \label{prelim}
In this section, we will review the basic properties of $H$-surfaces,
a concept that we next define. We will
call a smooth oriented surface $\Sigma_H$ in $\BHH$ an {\em $H$-surface} if it is
embedded and its mean curvature is constant
equal to $H$; we will assume that $\Sigma_H$ is appropriately
oriented so that $H$ is non-negative. We will  use the cylinder model
of $\BHH$ with coordinates $(\rho, \theta, t)$; here $\rho$ is the hyperbolic distance
from the origin (a chosen base point) in $\BH^2_0$, where $\BH^2_t$ denotes $\BH^2\times\{t\}$.
  We next describe the $H$-catenoids mentioned in the Introduction.

The following $H$-catenoids family will play a particularly important role in our construction.

\subsection{Rotationally invariant vertical $H$-catenoids $\C^H_d$} \label{Hcatenoid} \

We begin this section by recalling several results in~\cite{nsest1, ner1}.
Given $H\in (0,\ff)$ and $d\in [-2H,\infty)$,
let
\[
\eta_d=\cosh^{-1}\left(\frac{2dH+\sqrt{1-4H^2+d^2}}{1-4H^2}\right)
\]
 and let
$\lambda_d\colon [\eta_d,\infty)\to [0,\infty)$ be the function defined as follows.
\begin{equation}\label{Hcat-eqn}
\lambda_d(\rho)= \int ^{\rho}_{\eta_d} \frac{d+2H\cosh r}{\sqrt{\sinh^2 r - ( d+2H\cosh r)^2}}dr.
\end{equation}
Note that $\lambda_d(\rho)$ is a strictly increasing function with
$\lim_{\rho\to\infty}\lambda_d(\rho)= \infty$ and derivative
$\lambda'_d(\eta_d)=\infty$ when $d\in (-2H,\infty)$.

In~\cite{nsest1} Nelli,   Sa Earp,   Santos and   Toubiana proved
that there exists a 1-parameter family of
embedded $H$-catenoids $\{\C^H_d \ | \ d\in (-2H,\infty)\}$ obtained by rotating
a generating curve $\lambda_d(\rho)$ about the $t$-axis. The generating
curve $\wh{\lambda}_d $   is obtained by doubling the curve
$(\rho, 0, \lambda_d(\rho))$, $\rho\in[\eta_d,\infty)$, with its reflection
$(\rho, 0, -\lambda_d(\rho))$, $\rho\in[\eta_d,\infty)$.
Note that $\wh{\lambda}_d$ is a smooth curve and that the necksize, $\eta_d$,
is a strictly increasing  function in $d$ satisfying the properties that
$\eta_{-2H}=0$   and $\lim_{d\to \infty}\eta_d=\infty$.

If $d=-2H$, then by rotating the curve $(\rho, 0, \lambda_d(\rho))$ around the $t$-axis
one obtains a simply-connected $H$-surface $E_H$ that is an entire graph over $\BH^2_0$.
We denote by $-E_H$ the reflection of $E_H$ across $\BH^2_0$.

We next recall the definition of the mean curvature vector.

\begin{definition}\label{mcvector}
Let  $M$ be an oriented surface in an oriented Riemannian three-manifold and suppose that
$M$ has non-zero mean curvature $H(p)$ at $p$. The
{\bf mean curvature vector at $p$} is $ \HH(p):=H(p)N(p)$, where $N(p)$ is its unit normal
vector at $p$. The mean curvature vector  $\HH(p)$ is independent of the orientation on $M$.
\end{definition}

Note that the mean curvature vector $\HH$ of $\C^H_d$
points into the connected component
of $\BHH -\C^H_d$ that contains the $t$-axis. The mean curvature vector of $E_H$ points
upward while the mean curvature vector of $-E_H$ points downward.

In order to construct the  examples described in Theorem~\ref{main},
we first obtain  certain geometric properties satisfied by
$H$-catenoids. For example, in the following lemma, we show that for certain
values of $d_1$ and $d_2$, the catenoids $\C^H_{d_1}$ and $\C^H_{d_2}$ are disjoint.

Given $d\in (-2H,\infty)$, let $b_{d}(t):=\lambda_d^{-1}(t)$ for $t\geq 0$; note that
$b_d(0)=\eta_{d}$. Abusing the notation let $b_d(t):=b_d(-t)$ for $t\leq 0$.

\begin{lemma}[Disjoint $H$-catenoids] \label{disjointlem}   Given $d_1>2$, there
exist $d_0>d_1$ and $\delta_0>0$ such that for any $d_2\in [d_0,\infty)$, then
\[
\inf_{t\in \mathbb R}( b_{d_2}(t)-b_{d_1}(t))\geq \delta_0.
\]
In particular, the corresponding $H$-catenoids are disjoint, i.e.  $\C^H_{d_1}\cap\C^H_{d_2}=\emptyset$.

Moreover, $b_{d_2}(t)-b_{d_1}(t)$ is decreasing for $t>0$ and increasing for $t<0$. In particular,
\[
\sup_{t\in \mathbb R}( b_{d_2}(t)-b_{d_1}(t))=b_{d_2}(0)-b_{d_1}(0)= \eta_{d_2}-\eta_{d_1}.
\]
\end{lemma}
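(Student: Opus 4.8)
The plan is to reduce everything to the monotonicity of two things in $d$: the necksize $\eta_d$ and the generating function $\lambda_d$. First I would observe that $b_d(t) = \lambda_d^{-1}(t)$ is the "radius as a function of height" along the generating curve $\wh\lambda_d$, and that the upper half of the catenoid $\C^H_d$ is the graph $\rho = b_d(t)$ for $t \geq 0$, with $b_d(0) = \eta_d$. Since $\lambda_d$ is strictly increasing from $[\eta_d,\infty)$ onto $[0,\infty)$, the inverse $b_d\colon [0,\infty) \to [\eta_d,\infty)$ is well defined, strictly increasing, with $b_d(0) = \eta_d$; the even extension $b_d(t) = b_d(-t)$ makes it an even function, so it suffices to prove all claims for $t \geq 0$.

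The key monotonicity input is that for fixed height $t$, the radius $b_{d}(t)$ is increasing in $d$, and moreover the \emph{gap} $b_{d_2}(t) - b_{d_1}(t)$ is decreasing in $t$ for $t > 0$. I would establish the second fact directly from the ODE satisfied by the generating curves: the catenoid condition gives $\lambda_d'(\rho) = \frac{d + 2H\cosh\rho}{\sqrt{\sinh^2\rho - (d+2H\cosh\rho)^2}}$, hence along the profile curve $\frac{dt}{d\rho} = \lambda_d'(\rho)$, so $\frac{d\rho}{dt} = b_d'(t) = \frac{\sqrt{\sinh^2 b_d(t) - (d + 2H\cosh b_d(t))^2}}{d + 2H\cosh b_d(t)}$. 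One checks that, as a function of the pair $(\rho, d)$ with $\rho = b_d(t)$, this expression is decreasing in $d$ along the trajectory: at a common height $t$ the larger-$d$ curve lies at larger radius $b_{d_2}(t) > b_{d_1}(t) \geq \eta_{d_2} > \eta_{d_1}$ (using that $\eta_d$ is strictly increasing and that $b_{d_1}(t) \geq \eta_{d_1}$, together with the ordering of necksizes — this needs the ordering $b_{d_2}(t) \geq \eta_{d_2}$, i.e. that the two profile curves do not cross, which I prove by a standard comparison/uniqueness argument for the ODE), and a slope comparison then shows $b_{d_2}'(t) \leq b_{d_1}'(t)$ for $t > 0$. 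Integrating, $b_{d_2}(t) - b_{d_1}(t)$ is decreasing for $t > 0$, and by evenness increasing for $t < 0$; its supremum over $\R$ is attained at $t = 0$ and equals $\eta_{d_2} - \eta_{d_1}$, which is the "Moreover" part.

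For the first part, the infimum over $t$ of $b_{d_2}(t) - b_{d_1}(t)$ is therefore $\lim_{t\to\infty}(b_{d_2}(t) - b_{d_1}(t))$. Since both catenoids are asymptotic to the same vertical cylinder-type behavior at infinity, this limit need not be bounded below by a positive constant for a \emph{fixed} $d_2$; this is exactly why the statement only asserts the existence of some $d_0 > d_1$ and $\delta_0 > 0$ working for all $d_2 \geq d_0$. So the final step is an asymptotic estimate: I would show that as $\rho \to \infty$, $\lambda_d(\rho)$ grows so that $b_d(t) - \frac12\log(\text{something in } t)$ (or the appropriate asymptotic normal form) converges, with the limiting constant increasing without bound in $d$; then for $d_2$ large enough the curves stay a definite distance $\delta_0$ apart even in the limit $t \to \infty$. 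Concretely I expect $b_{d_2}(t) - b_{d_1}(t) \to c(d_2) - c(d_1)$ for an increasing unbounded function $c$, so choosing $d_0$ with $c(d_0) - c(d_1)$ large and then $\delta_0$ a bit smaller than $\inf_{d_2 \geq d_0}(c(d_2) - c(d_1)) = c(d_0) - c(d_1)$ works — but one must also confirm the gap stays above $\delta_0$ for all finite $t$, which follows since by the monotonicity it decreases down to that limit, so it is everywhere $\geq c(d_2) - c(d_1) \geq \delta_0$. Disjointness of $\C^H_{d_1}$ and $\C^H_{d_2}$ is then immediate since at every height the radii differ by at least $\delta_0 > 0$.

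\textbf{Main obstacle.} The technical heart is the asymptotic analysis of $\lambda_d(\rho)$ as $\rho \to \infty$ — extracting the precise growth rate and showing the additive constant in the expansion of $b_d(t)$ is monotone and unbounded in $d$. The slope-comparison step (that the profile curves are nested and $b_{d_2}'(t) \leq b_{d_1}'(t)$ for $t>0$) is a second, more routine obstacle requiring care with the ODE comparison and the behavior at the neck where $\lambda_d'(\eta_d) = \infty$, i.e. $b_d'(0) = 0$.
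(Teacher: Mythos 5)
Your overall plan coincides with the paper's: prove the ``Moreover'' part by comparing slopes via $b_d'(t)=1/\lambda_d'(b_d(t))$, and get the uniform lower bound $\delta_0$ from an asymptotic analysis of $\lambda_d(\rho)$ as $\rho\to\infty$ showing the limiting gap grows without bound in $d_2$. But the proposal has a genuine gap: the asymptotic estimate, which you yourself flag as ``the technical heart,'' is never carried out, and it is essentially the entire content of the proof. The paper does it by splitting $\lambda_d=f_d+J_d$, computing $f_d$ in closed form as $\frac{2H}{\sqrt{1-4H^2}}\cosh^{-1}\bigl(\frac{(1-4H^2)\cosh\rho-2dH}{\sqrt{d^2+1-4H^2}}\bigr)$, so that $f_d(\rho)=\frac{2H}{\sqrt{1-4H^2}}\bigl(\rho+\ln\frac{1-4H^2}{\sqrt{d^2+1-4H^2}}\bigr)+o(1)$, and proving the uniform bound $J_d(\rho)<2\pi\sqrt{1-2H}$ for all $d>2$ and all $\rho$. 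This yields, for $t$ large, $\frac{2H}{\sqrt{1-4H^2}}(b_{d_2}(t)-b_{d_1}(t))>\frac12\ln\sqrt{\frac{d_2^2+1-4H^2}{d_1^2+1-4H^2}}-2\pi\sqrt{1-2H}$, which is the quantitative fact your $c(d_2)-c(d_1)$ heuristic gestures at. Note also that the error term $J_d(\infty)$ is only \emph{bounded}, not monotone in $d$, so your assertion that the limiting constant $c(d)$ is increasing in $d$ is unjustified; the paper avoids this by working with the uniform bound and a separate argument to show $\inf_{d_2\ge d_0}\delta(d_2)>0$.

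A second, smaller problem is the claim that the two profile curves are nested ``by a standard comparison/uniqueness argument for the ODE.'' The curves $b_{d_1}$ and $b_{d_2}$ solve \emph{different} ODEs, so uniqueness gives nothing; a priori they could cross. What is true (and what the paper uses) is that $\lambda_{d_2}'(\rho)>\lambda_{d_1}'(\rho)$ pointwise, so at any crossing the gap $b_{d_2}-b_{d_1}$ is strictly decreasing, hence there is at most one crossing for $t>0$; combined with positivity at $t=0$ (since $\eta_{d_2}>\eta_{d_1}$) \emph{and} positivity of the gap for $t$ large (which again requires the asymptotic estimate), one rules out crossings. Your slope-comparison step for the ``Moreover'' part needs this nestedness as input, so the logical order must be: asymptotics first, then non-crossing, then monotonicity of the gap. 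As written, the proposal uses nestedness before anything that could establish it.
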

The proof of the above lemma requires a rather lengthy computation that is given in the Appendix.

We next recall the well-known mean curvature comparison principle.

\begin{proposition}[Mean curvature comparison principle] \label{max}
Let $M_1$ and $M_2$ be two complete, connected embedded surfaces in a three-dimensional Riemannian
manifold.
Suppose that  $p\in M_1\cap M_2$  satisfies that
a neighborhood of $p$ in $M_1$ locally lies on the side of a neighborhood of $p$ in $M_2$ into which $\HH_2(p)$ is pointing. Then $|H_1|(p)\geq |H_2|(p)$. Furthermore, if $M_1$
and $M_2$ are constant mean curvature surfaces with $|H_1|=|H_2|$, then $M_1 =M_2$.
\end{proposition}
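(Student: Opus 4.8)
The plan is to reduce the statement to the classical Hopf maximum principle for a quasilinear elliptic equation satisfied by the surfaces written as graphs. First I would note that $M_1$ and $M_2$ are necessarily tangent at $p$: if their tangent planes at $p$ were distinct, then, writing $M_2$ near $p$ as a graph over $T_pM_2$, a curve in $M_1$ through $p$ transverse to $T_pM_2$ would have points on both sides of $M_2$ arbitrarily close to $p$, contradicting the one-sidedness hypothesis. I then choose Fermi-type coordinates $(x_1,x_2,x_3)$ near $p$ with $p=0$, with the $x_3$-direction pointing the way $\HH_2(p)$ points, and with $\{x_3=0\}$ tangent to $M_2$ at $0$. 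After shrinking the neighborhood, $M_2$ is the graph $x_3=u_2(x_1,x_2)$ with $u_2(0)=0$, $\nabla u_2(0)=0$, and, because $M_1$ is tangent to $M_2$ at $p$ and lies on the side $x_3\ge u_2$, $M_1$ is also a graph $x_3=u_1(x_1,x_2)$ with $u_1(0)=0$, $\nabla u_1(0)=0$ and $u_1\ge u_2$ near $0$.

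Next I would use that the mean curvature of a graph is governed by a second-order quasilinear elliptic operator $\mathcal{H}[u]=a^{ij}(x,u,\nabla u)\,\partial_{ij}u+b(x,u,\nabla u)$, with $(a^{ij})$ positive definite, where $\mathcal{H}[u](x)$ is the mean curvature at $(x,u(x))$ of the graph of $u$ computed with the unit normal pointing toward increasing $x_3$; by the choice of the $x_3$-direction this can be normalized so that $\mathcal{H}[u_2](0)=|H_2|(p)$, while $\mathcal{H}[u_1](0)$ is the mean curvature of $M_1$ at $p$ with respect to that same upward normal, hence equals $\pm|H_1|(p)$ and in particular $\mathcal{H}[u_1](0)\le|H_1|(p)$. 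Setting $w=u_1-u_2\ge 0$, so $w(0)=0$ and $\nabla w(0)=0$, the usual mean-value trick $\mathcal{H}[u_1]-\mathcal{H}[u_2]=\int_0^1\tfrac{d}{dt}\mathcal{H}[u_2+tw]\,dt$ exhibits $\mathcal{H}[u_1]-\mathcal{H}[u_2]=\mathcal{L}w$ for a linear second-order elliptic operator $\mathcal{L}=A^{ij}(x)\partial_{ij}+B^i(x)\partial_i+C(x)$. Evaluating at $0$ and using $w(0)=0$, $\nabla w(0)=0$, only the second-order part survives: $\mathcal{L}w(0)=A^{ij}(0)\,\partial_{ij}w(0)$. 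Since $0$ is an interior minimum of $w$, the Hessian $\partial_{ij}w(0)$ is positive semidefinite, and $(A^{ij}(0))$ is positive definite, so $\mathcal{L}w(0)\ge 0$, i.e. $\mathcal{H}[u_1](0)\ge\mathcal{H}[u_2](0)$. Combining, $|H_1|(p)\ge\mathcal{H}[u_1](0)\ge\mathcal{H}[u_2](0)=|H_2|(p)$, which is the first assertion.

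For the equality statement, assume $M_1,M_2$ have constant mean curvature with $|H_1|\equiv|H_2|\equiv c$ and $|H_1|(p)=|H_2|(p)$. Then the displayed chain is a chain of equalities; moreover on the connected graphs near $0$ the upward mean curvature is continuous with values in $\{c,-c\}$ and equals $c$ at $0$, so $\mathcal{H}[u_1]\equiv\mathcal{H}[u_2]\equiv c$ near $0$ and hence $\mathcal{L}w\equiv 0$ there. Since $w\ge 0$ and $w(0)=0$, the strong maximum principle of E.~Hopf (in the form valid with no sign condition on the zeroth-order coefficient, which applies here because $w\ge 0$) gives $w\equiv 0$ on a neighborhood of $0$; thus $M_1$ and $M_2$ coincide near $p$. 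Let $A\subseteq M_1$ be the set of points near which $M_1$ and $M_2$ coincide as subsets of the ambient manifold; then $A$ is open and nonempty, and it is closed because at any point of $\overline{A}\cap M_1$ the surfaces share a tangent plane and, written as graphs over it, their difference solves a linear elliptic equation and vanishes on a nonempty open set, so Aronszajn's unique continuation theorem forces it to vanish near that point. By connectedness of $M_1$, $A=M_1$, i.e. $M_1\subseteq M_2$; since $M_1$ is complete it is closed in $M_2$, and being also open and nonempty in the connected surface $M_2$ we conclude $M_1=M_2$.

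The analytic ingredients here — Hopf's strong maximum principle and Aronszajn unique continuation for second-order elliptic operators — are entirely standard, so the only genuinely delicate point is bookkeeping: arranging the coordinates and sign conventions so that the geometric hypothesis "$M_1$ lies on the side of $M_2$ into which $\HH_2(p)$ points" becomes exactly the pair of conditions $u_1\ge u_2$ and $\mathcal{H}[u_2](0)=|H_2|(p)$, and keeping track that $\mathcal{H}[u_1](0)\le|H_1|(p)$ regardless of which way $M_1$ is oriented. Once that translation is in place, the rest is the routine differentiation-under-the-integral that produces the linear comparison operator $\mathcal{L}$ together with the two maximum-principle citations.
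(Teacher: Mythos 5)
Your proof is correct and is the standard argument (reduce to tangency, write both surfaces as graphs over the common tangent plane, linearize the quasilinear mean curvature operator on the difference, apply the Hopf strong maximum principle at the interior zero minimum, then propagate local coincidence by a connectedness/unique continuation argument). The paper itself offers no proof of this proposition --- it is recalled as a well-known fact --- so there is nothing to contrast with; your write-up, including the careful handling of the sign conventions so that $\mathcal{H}[u_2](0)=|H_2|(p)$ and $\mathcal{H}[u_1](0)\le|H_1|(p)$, is a faithful rendition of the classical proof.
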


\section{The Examples}

For a fixed $H\in (0,1/2)$, the outline of  construction is as follows.
First, we will take two disjoint $H$-catenoids $\C_1$
and $\C_2$ whose existence  is given
in Lemma~\ref{disjointlem}. These catenoids $\C_1$, $ \C_2$ bound a region $\Omega$ in $\BHH$ with fundamental group
$\mathbb{Z}$. In the universal cover $\wt{\Omega}$ of
$\Omega$, we define a piecewise smooth  compact exhaustion
$\Delta_1\subset\Delta_2\subset...\subset\Delta_n\subset...$ of $\wt{\Omega}$.
Then, by solving the $H$-Plateau  problem for special curves $\Gamma_n\subset \partial \Delta_n$,
we obtain minimizing $H$-surfaces $\Sigma_n$ in $\Delta_n$ with $\partial \Sigma_n=\Gamma_n$.
In the limit set of these surfaces, we find  an $H$-plane $\p$ whose projection to $\Omega$
is the desired non-proper  $H$-plane $\Sigma_H\subset \mathbb{H}^2\times \R$.

\subsection{Construction of $\wt{\Omega}$} \

Fix $H\in(0,\ff)$ and $d_1,d_2\in (2,\infty)$, $d_1<d_2$, such that by Lemma~\ref{disjointlem},
the related $H$-catenoids $\C^H_{d_1}$ and $\C^H_{d_2}$ are disjoint;
note that in this case, $\C_{d_1}^H$ lies in the interior of the simply-connected component of
$\BHH-\C^H_{d_2}$. We will use the notation $\C_i:= \C^H_{d_i}$.
Recall that both catenoids have the same rotational axis, namely the $t$-axis, and recall that the mean
curvature vector $\HH_i$ of $\C_i$ points into the connected component of $\BHH -\C_i$ that contains
the $t$-axis. We emphasize here that $H$ is {\em fixed} and so we will omit describing it in
future notations.

Let $\Omega$ be the closed region in $\BHH$ between $\C_1$ and $\C_2$,
i.e., $\partial \Omega = \C_1\cup\C_2$ (Figure~\ref{univcover1}-left).
Notice that the set of boundary points at infinity
$\PI \Omega$ is equal to  $\si\times\{-\infty\} \cup \si\times\{\infty\}$, i.e., the corner circles
in $\SI$ in the product compactification, where we view $\mathbb{H}^2$ to be the open unit disk
$\{(x,y)\in \R^2 \mid x^2+y^2<1\}$ with base point the origin $\vec{0}$.

\begin{figure}[h]
\begin{center}
$\begin{array}{c@{\hspace{.2in}}c}

\relabelbox  {\epsfysize=2in \epsfbox{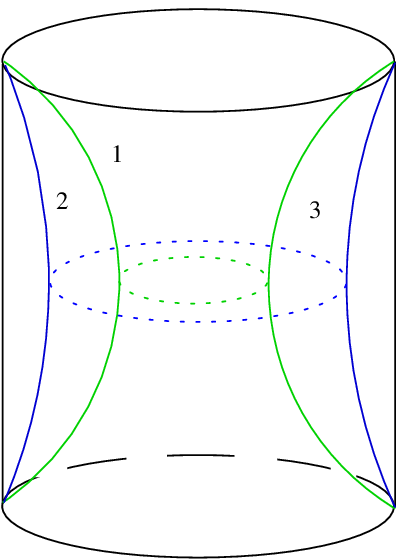}}
\relabel{1}{\footnotesize \color{green} $\C_1$} \relabel{2}{\footnotesize \color{blue}  $\C_2$}
\relabel{3}{\footnotesize $\Omega$}
\endrelabelbox &

\relabelbox  {\epsfxsize=2.5in \epsfbox{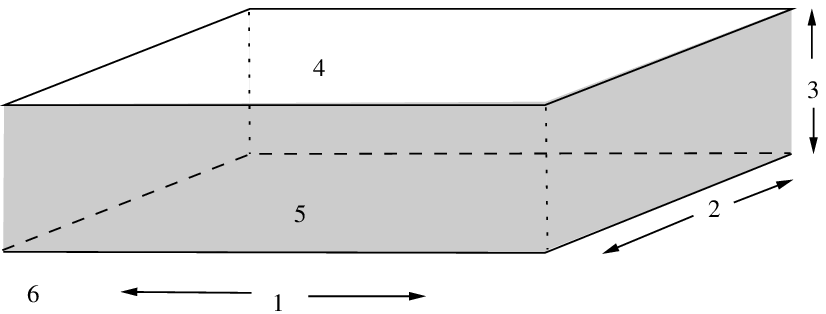}}
\relabel{1}{\footnotesize $\wt{\theta}$} \relabel{2}{\small $t$} \relabel{3}{\footnotesize
$\rho$} \relabel{4}{\footnotesize $\wt{\C}_2$} \relabel{5}{$\wt{\Omega}$} \relabel{6}{\footnotesize $\wt{\C}_1$}
\endrelabelbox \\ [0.4cm]
\end{array}$

\end{center}

\caption{ The induced coordinates $(\rho, \wt{\theta}, t)$ in $\wt{\Omega}$.} \label{univcover1}

\end{figure}

By construction, $\Omega$ is topologically a solid torus. Let $\wt{\Omega}$ be the universal
cover of $\Omega$. Then, $\partial \wt{\Omega} = \wt{\C}_1\cup\wt{\C}_2$ (Figure~\ref{univcover1}-right),
where $\wt{\C}_1,\wt{\C}_2$ are the respective lifts to $\wt{\Omega}$  of  ${\C}_1, {\C}_2$.
Notice that $\wt{\C}_1$ and $\wt{\C}_2$ are both $H$-planes, and the mean curvature vector $\HH$ points
outside of $\wt{\Omega}$ along $\wt{\C}_1$ while $\HH$ points inside of $\wt{\Omega}$ along $\wt{\C}_2$.
We will use the induced coordinates $(\rho, \wt{\theta}, t)$ on $\wt{\Omega}$ where
$\wt{\theta}\in (-\infty, \infty)$. In particular, if
\begin{equation}\label{covermap}
\Pi\colon \wt{\Omega}\to\Omega
\end{equation}
 is the covering map,
then $\Pi(\rho_o, \wt{\theta}_o, t_o)= (\rho_o, \theta_o, t_o)$ where $\theta_o\equiv\wt{\theta_o} \mod 2\pi$.

Recalling the definition of $b_i(t)$, $i=1,2$, note that a point $(\rho, \theta, t)$ belongs to $\Omega$
if and only if $\rho\in [b_1(t), b_2(t)]$ and we can write
\[
\wt{\Omega}= \{ (\rho, \wt{\theta}, t) \ | \ \rho\in[b_1(t), b_2(t)], \ \wt{\theta}\in \BR,\ t\in\BR\}.
\]

\subsection{Infinite Bumps in $\wt{\Omega}$.} \

Let $\gamma$ be the geodesic through the origin in $\BH^2_0$ obtained by intersecting $\BH^2_0$
with the vertical plane $\{\theta=0\} \cup \{\theta=\pi\}$.  For $s\in[0,\infty)$, let $\varphi_s$
be the orientation preserving hyperbolic isometry of $\BH^2_0$ that is the hyperbolic translation along
the geodesic $\gamma$
with $\varphi_s(0,0)=(s,0)$. Let
\begin{equation}\label{shifting}
\wh{\varphi}_s\colon \BHH\to \BHH, \quad
\wh{\varphi}_s(\rho,\theta,t)=(\varphi_s(\rho,\theta),t)
\end{equation}
be the related extended isometry of $\BHH$.

Let $\C_d$ be an embedded $H$-catenoid as defined in Section~\ref{Hcatenoid}.
Notice that the rotation
axis of the $H$-catenoid $\wh{\varphi}_{s_0}(\C_d)$ is the
vertical line $\{ (s_0, 0, t) \mid t\in \BR \}$.

Let $\delta:=\inf_{t\in \mathbb R}(b_2(t)-b_1(t))$, which gives an upper bound estimate for
the asymptotic distance between the catenoids; recall that
by our choices of $\C_1,\C_2$ given in Lemma~\ref{disjointlem}, we  have $\delta>0$.
Let $\delta_1=\frac{1}{2}\min \{\delta, \eta_1\}$ and let $\delta_2=\delta-\frac{\delta_1}{2}$.
Let  $\wh{\C}_{1}:=\wh{\varphi}_{\delta_1}(\C_1)$  and  $\wh{\C}_{2}:=\wh{\varphi}_{-\delta_2}(\C_2)$.
Note that $\delta_1+\delta_2>\delta$.

\begin{claim}\label{infinitestrip}
The intersection $\Omega\cap \wh{\C}_{i}$, $i=1,2$, is an infinite strip.
\end{claim}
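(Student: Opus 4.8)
The plan is to understand $\Omega \cap \wh{\C}_i$ by passing to the generating curves and using the rotational symmetry about the $t$-axis. Recall that $\Omega = \{(\rho,\theta,t) \mid \rho \in [b_1(t), b_2(t)]\}$ is rotationally invariant, so a point of $\wh{\C}_i$ lies in $\Omega$ precisely when its $\rho$-coordinate (hyperbolic distance to the $t$-axis) lies in $[b_1(t), b_2(t)]$. Since $\wh{\C}_i = \wh{\varphi}_{\pm\delta_j}(\C_i)$ is a vertical translate of a rotationally invariant catenoid, its shape in each horizontal slice $\BH^2_t$ is a circle $\kappa_i(t) \subset \BH^2_t$ centered at $(\pm\delta_j, 0) \in \BH^2_t$ of hyperbolic radius $b_i(t)$, together with its reflection; note $\wh{\C}_i$ is the union over $t \in \BR$ (with the appropriate neck constraint) of these circles. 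The first step is therefore to describe, for each fixed $t$, the set $\kappa_i(t) \cap \{\rho \in [b_1(t), b_2(t)]\}$, which is the intersection of a hyperbolic circle with a hyperbolic annulus centered at the $t$-axis.

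The key quantitative input is the choice of $\delta_1, \delta_2$. For $i=1$: $\wh{\C}_1$ has axis at distance $\delta_1$ from the $t$-axis, and in slice $\BH^2_t$ it is the circle of radius $b_1(t)$ about $(\delta_1,0)$. Points on this circle have $\rho$-coordinate ranging over $[b_1(t) - \delta_1, b_1(t) + \delta_1]$ (using that hyperbolic distance to the origin restricted to a hyperbolic circle attains its min and max along the geodesic through the two centers, and the triangle inequality is an equality there). Since $\delta_1 = \tfrac12\min\{\delta,\eta_1\} \le \tfrac12 \eta_1 \le \tfrac12 b_1(t)$ and $\delta_1 \le \tfrac12\delta \le \tfrac12(b_2(t)-b_1(t))$, one checks that $b_1(t) - \delta_1 \ge b_1(t)$... more precisely one wants the arc of $\kappa_1(t)$ with $\rho \ge b_1(t)$ to be a single nonempty subarc and the arc with $\rho \le b_2(t)$ to be everything; the inequality $b_1(t)+\delta_1 \le b_2(t)$ (equivalently $\delta_1 \le \delta \le b_2(t)-b_1(t)$) guarantees the whole circle lies inside $\rho \le b_2(t)$, and $b_1(t)-\delta_1 < b_1(t)$ guarantees a nondegenerate subarc lies in $\rho \ge b_1(t)$. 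Hence in each slice $\Omega \cap \wh{\C}_1$ is a single closed arc (plus its reflection), varying continuously and without degeneration in $t$ — so the union is an infinite strip. For $i=2$: $\wh{\C}_2$ has axis at distance $\delta_2$ on the opposite side, circles of radius $b_2(t)$, and since $\delta_1 + \delta_2 > \delta$ one gets $b_2(t) - \delta_2 < b_2(t) - (\delta - \delta_1) \le b_1(t)$... again the point is that the circle $\kappa_2(t)$ pokes inside $\rho < b_1(t)$ (so it is not entirely contained in $\Omega$) but also has points with $\rho > b_2(t)$ only on a complementary arc, leaving a single subarc in $[b_1(t), b_2(t)]$. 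I would carry out the two cases symmetrically, recording the elementary hyperbolic-circle computation that the $\rho$-extent of a circle of radius $r$ centered at distance $a$ from the $t$-axis is exactly $[\,|r-a|,\, r+a\,]$ when $a \le r$, and that the level sets $\{\rho = c\}$ meet the circle in at most two points (one subarc on each side).

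Having the slicewise picture, the final step is to assemble it. Parametrize $\wh{\C}_i$ by $(\phi, t)$ where $\phi$ is the angular parameter on the generating circle and $t$ is height (with $t$ ranging over the catenoid's domain, which is all of $\BR$ after doubling, since $\wh{\C}_i$ is topologically a cylinder / $H$-plane). The constraint $b_1(t) \le \rho(\phi,t) \le b_2(t)$ cuts out, for each $t$, a closed interval of $\phi$-values with nonempty interior by the previous paragraph; the endpoints depend continuously (indeed smoothly, by the implicit function theorem applied to $\rho(\phi,t) = b_j(t)$ away from the critical points $\phi = 0, \pi$ of $\rho|_{\kappa_i(t)}$) on $t$, and never collapse. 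A set in the cylinder $\{(\phi,t)\}$ cut out by two disjoint continuous graphs $\phi = \alpha(t) < \phi = \beta(t)$ over all of $\BR$ is homeomorphic to $\BR \times [0,1]$, i.e. an infinite strip. The main obstacle I anticipate is the transversality/smoothness at the critical angles: one must make sure the boundary circle $\kappa_i(t)$ meets the boundary cylinders $\{\rho = b_j(t)\}$ transversally (so the intersection is genuinely a $1$-manifold with boundary and the strip is embedded, not pinched), and that the neck of the translated catenoid — where the generating curve has vertical tangent, $\lambda_i'(\eta_i) = \infty$ — does not create a problem; here the estimate $\delta_1 \le \tfrac12\eta_1$ is exactly what keeps the relevant arc away from a degenerate configuration. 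With those transversality checks in place, the identification with an infinite strip is immediate.
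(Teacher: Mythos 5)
Your slicewise strategy is exactly the one the paper uses: intersect $\wh{\C}_i$ with each horizontal slice $\BH^2_t$, observe that the resulting circle $\kappa_i(t)$ has $\rho$-extent $[b_i(t)-a,\,b_i(t)+a]$ where $a$ is the displacement of its center from the $t$-axis, and check which of the two boundary circles $\C_1\cap\BH^2_t$, $\C_2\cap\BH^2_t$ of the annulus $\Omega\cap\BH^2_t$ it crosses. Your treatment of $i=1$ is essentially correct (modulo the momentary slip ``$b_1(t)-\delta_1\ge b_1(t)$,'' which you immediately retract): the whole circle stays in $\{\rho\le b_2(t)\}$ because $\delta_1<\delta\le b_2(t)-b_1(t)$, and it crosses $\C_1\cap\BH^2_t$ since $b_1(t)-\delta_1<b_1(t)<b_1(t)+\delta_1$, so each slice intersection is a single arc with endpoints on $\C_1$.

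The $i=2$ case, however, contains a genuine error. You assert $b_2(t)-\delta_2<b_1(t)$, deduced from $\delta_1+\delta_2>\delta$, so that $\kappa_2(t)$ ``pokes inside $\rho<b_1(t)$.'' The inequality goes the other way: since $\delta_2=\delta-\tfrac{\delta_1}{2}<\delta\le b_2(t)-b_1(t)$, one gets $b_2(t)-\delta_2\ge b_1(t)+\tfrac{\delta_1}{2}>b_1(t)$, so $\kappa_2(t)$ never enters $\{\rho<b_1(t)\}$; it leaves $\Omega$ only through $\C_2\cap\BH^2_t$ (because $b_2(t)+\delta_2>b_2(t)$), and the slice intersection is a single arc with endpoints on $\C_2$. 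This matters beyond bookkeeping: in the configuration you describe, where the circle exits the annulus on both sides, $\kappa_2(t)\cap\Omega$ would consist of \emph{two} disjoint arcs (one on each of the two half-circles on which $\rho$ is monotone), and the union over $t$ would be two strips rather than one, so the step as written does not yield the claim. The inequality $\delta_1+\delta_2>\delta$ is used in the paper for a different purpose (Remark~\ref{rmkbumps}), not here. Once this is corrected, the rest of your argument --- including the assembly of the continuously varying arcs into an infinite strip, which the paper leaves implicit --- goes through and coincides with the paper's proof.
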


\begin{proof}
Given $t\in\mathbb R$, let  $\BH^2_t$ denote $\BH^2\times\{t\}$. Let $\tau^i_t:=\C_i\cap \BH^2_t$
and  $\wh{\tau}^i_t:=\wh{\C}_i\cap \BH^2_t$. Note that for $i=1,2$, $\tau^i_t$ is a circle in
$\BH^2_t$ of radius $b_i(t)$ centered at $(0,0,t)$ while $\wh{\tau}^1_t$ is a circle in $\BH^2_t$
of radius $b_1(t)$ centered at $p_{1,t}:=(\delta_1,0,t)$ and $\wh{\tau}^2_t$ is a circle in $\BH^2_t$
of radius $b_2(t)$ centered at $p_{2,t}:=(-\delta_2,0,t)$. We claim that for any $t\in\mathbb R$,
the intersection $\wh{\tau}^i_t\cap \Omega$ is an arc with end points in $\tau^i_t$, $i=1,2$.
This result would give that $\Omega\cap \wh{\C}_{i}$ is an infinite strip.  We next prove this claim.

Consider the case $i=1$ first. Since $\delta_1<\eta_1\leq b_1(t)$, the center $p_{1,t}$ is inside
the disk in $\BH^2_t$ bounded by $\tau^1_t$. Since the radii of $\tau^1_t$ and $\wh{\tau}^1_t$ are
both equal to $b_1(t)$, then the intersection $\tau^1_t\cap\wh{\tau}^1_t$ is nonempty. It remains to
show that $\wh{\tau}^1_t\cap \tau^2_t=\emptyset$, namely that $b_1(t)+\delta_1<b_2(t)$. This follows because
\[
\delta_1<\delta=\inf_{t\in \mathbb R}(b_2(t)-b_1(t)).
\]
This argument shows that $\Omega\cap \wh{\C}_{1}$ is an infinite strip.

Consider now the case $i=2$. Since $\delta_2<\delta< b_2(t)$, the center $p_{2,t}$ is inside
the disk in $\BH^2_t$ bounded by $\tau^2_t$. Since the radii of $\tau^2_t$ and $\wh{\tau}^2_t$ are
both equal to $b_2(t)$, then the intersection $\tau^2_t\cap\wh{\tau}^2_t$ is nonempty. It remains to
show that $\tau^1_t\cap \wh{\tau}^2_t=\emptyset$, namely that $b_2(t)-\delta_2>b_1(t)$. This follows because
\[
b_2(t)-b_1(t)\geq \inf_{t\in \mathbb R}(b_2(t)-b_1(t))=\delta>\delta_2
\]
This completes the proof  that $\Omega\cap \wh{\C}_{2}$ is an infinite
strip and finishes the proof of the claim.
\end{proof}

\begin{figure}[b]

\relabelbox  {\epsfxsize=3.5in

\centerline{\epsfbox{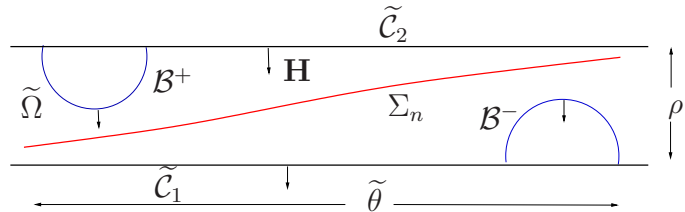}}}

\relabel{1}{$\B^+$}
\relabel{2}{$\wt{\theta}$}
\relabel{3}{$\mathbf{H}$}
\relabel{4}{$\B^-$}
\relabel{5}{$\Sigma_n$}
\relabel{6}{$\wt{\C}_2$}
\relabel{7}{$\wt{\C}_1$}
\relabel{8}{$\wt{\Omega}$}
\relabel{9}{\small $\rho$}

\endrelabelbox

\caption{\label{bumps} \small The position of the bumps $\B^\pm$ in $\wt{\Omega}$
is shown in the picture. The small arrows show the mean curvature vector direction.
The $H$-surfaces $\Sigma_n$ are disjoint from the infinite strips $\B^\pm$ by construction. }
\end{figure}

Now, let $Y^+:=\Omega\cap \wh{\C}_2$ and let $Y^-:=\Omega\cap \wh{\C}_1$. In light of Claim~\ref{infinitestrip}
and its proof, we know  that  $Y^+\cap \C_1=\emptyset$ and  $Y^-\cap \C_2=\emptyset$.

\begin{remark}\label{rmkbumps}
Note  that by construction,
any rotational surface contained in $\Omega$ must intersect
$\wh{\C}_{1}\cup\wh{\C}_{2}$. In particular, $Y^+\cup Y^-$
intersects all $H$-catenoids $\C_d$ for $d\in(d_1,d_2)$ as the circles $\C_d\cap \BH^2_t$ intersect either the
circle $\wh{\tau}^2_t$ or the circle $\wh{\tau}^1_t$ for some $t>0$ since $\delta_1+\delta_2>\delta$.
\end{remark}

In $\wt{\Omega}$, let $\B^+$ be the lift of $Y^+$ in $\wt{\Omega}$ which intersects the slice $\{\wt{\theta}=-10\pi\}$.
Similarly, let $\B^-$ be the lift of $Y^-$ in $\wt{\Omega}$ which intersects the slice $\{\wt{\theta}=10\pi \}$.
Note that each lift of $Y^+$ or $Y^-$is contained in a region where the  $\wt{\theta}$ values of their
points lie in ranges of the form $(\theta_0-\pi,\theta_0+ \pi)$ and so
$\B^+\cap \B^-=\emptyset$. See Figure~\ref{bumps}.

The $H$-surfaces $\B^\pm$ near the top and bottom of $\wt{\Omega}$ will act as barriers (infinite bumps) in
the next section, ensuring that the limit $H$-plane of a certain sequence of compact
$H$-surfaces does not collapse to
an $H$-lamination of
 $\wt{\Omega}$ all of whose leaves are invariant under translations in  the $\wt{\theta}$-direction.

Next we modify $\wt{\Omega}$ as follows. Consider the  component
of $\wt{\Omega}-(\B^+\cup\B^-)$ containing the slice $\{\wt{\theta}=0\}$.
From now on we will call the {\bf closure} of this region $\wt{\Omega}^*$.

\subsection{The Compact Exhaustion of $\wt{\Omega}^*$} \

Consider the rotationally invariant $H$-planes $E_H,-E_H$  described in Section~\ref{prelim}.
Recall that $E_H$ is a graph over the horizontal slice $\BH^2_0$ and it is also tangent to $\BH^2_0$
at the origin. Given $t\in\mathbb R$, let $E_H^t=-E_H+(0,0,t)$ and $-E_H^t=E_H-(0,0,t)$.
Both families $\{E_H^t\}_{t\in\mathbb R}$ and $\{-E_H^t\}_{t\in\mathbb R}$ foliate $\BHH$. Moreover,
there exists $n_0\in\mathbb N$ such that for any $n>n_0$, $n\in\mathbb N$, the following holds.
The highest (lowest) component of the
intersection $S_n^+:=E_H^n\cap \Omega$ ($S_n^-:=-E_H^n\cap \Omega$) is a rotationally
invariant annulus with boundary components
contained in $\C_1$ and $\C_2$. The annulus $S_n^+$ lies ``above'' $S_n^-$ and their intersection is empty.
The region $\U_n$ in $\Omega$ between  $S_n^+$ and $S_n^-$ is a solid torus, see Figure~\ref{univcover2}-left,
and the mean curvature vectors of $S_n^+$ and $S_n^-$ point into $\U_n$.

\begin{figure}[h]
\begin{center}
$\begin{array}{c@{\hspace{.2in}}c}

\relabelbox  {\epsfysize=2.5in \epsfbox{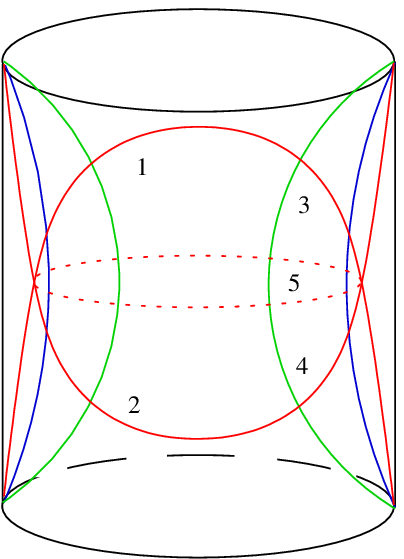}}
\relabel{1}{\footnotesize $E_H^n$}
\relabel{2}{\footnotesize $-E_H^n$}
\relabel{3}{\footnotesize $S_n^+$}
\relabel{4}{\footnotesize $S_n^-$}
\relabel{5}{\footnotesize $\U_n$} \endrelabelbox

&

\relabelbox  {\epsfxsize=2.5in \epsfbox{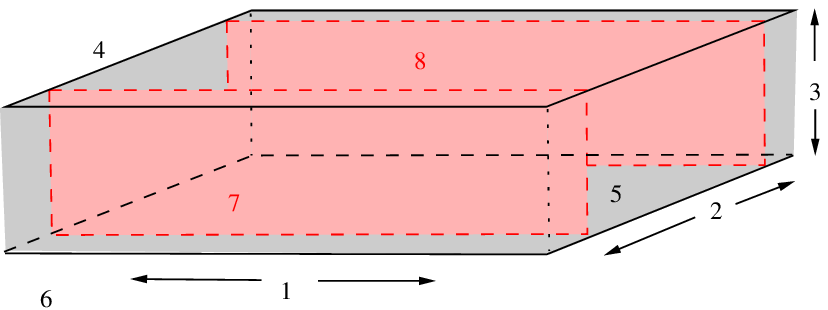}}
\relabel{1}{\footnotesize $\wt{\theta}$}
\relabel{2}{$t$}
\relabel{3}{\footnotesize $\rho$}
\relabel{4}{\footnotesize $\wt{\C}_2$}
\relabel{5}{\footnotesize $\wt{\U}_n$}
\relabel{6}{\footnotesize $\wt{\C}_1$}
\relabel{7}{\footnotesize \color{red} $\wt{S}_n^+$}
\relabel{8}{\footnotesize \color{red} $\wt{S}_n^-$} \endrelabelbox \\ [0.4cm]

\end{array}$

\end{center}

\caption{\label{univcover2} $\U_n = \Omega\cap \wh{\U}_n$ and $\wt{\U}_n$ denotes
its universal cover. Note that $\partial \wt{\U}_n\subset
\wt{\C}_1\cup\wt{\C}_2\cup \wt{S}^+_n\cup \wt{S}^-_n$.}

\end{figure}

Let $\wt{\U}_n\subset \wt{\Omega}$ be the universal cover of $\U_n$, see Figure~\ref{univcover2}-right.
Then, $\partial \wt{\U}_n- \partial \wt{\Omega}= \wt{S}_n^+\cup\wt{S}_n^-$ where can
view $\wt{S}_n^\pm$ as a lift to $\wt{\U}_n$ of the
universal cover of the annulus $S_n^\pm$. Hence, $\wt{S}_n^\pm$ is an infinite $H$-strip in $\wt{\Omega}$,
and the mean curvature vectors of the surfaces $\wt{S}_n^+, \wt{S}_n^-$  point into $\wt{\U}_n$ along $\wt{S}_n^\pm$.
Note that each $\wt{\U}_n$ has bounded $t$-coordinate. Furthermore, we can view $\wt{\U}_n$  as $(\U_n\cap \p_0)\times \BR$,
where $\p_0$ is the half-plane $\{\theta=0\}$ and the second coordinate is $\wt \theta$.
Abusing the notation, we {\bf redefine} $\wt{\U}_n$ to be  $\wt{\U}_n\cap \wt{\Omega}^*$,
that is we have removed the infinite bumps $\B^\pm$ from $\wt{\U}_n$.

Now, we will perform  a sequence of modifications of  $\wt{\U}_n$  so that
for each of these modifications, the $\wt{\theta}$-coordinate
in $\wt{\U}_n$ is  bounded and so that we obtain a compact exhaustion of $\wt{\Omega}^*$.
In order to do this, we will use arguments that are similar to those in Claim~\ref{infinitestrip}.
Recall that the necksize of $\C_2$ is $\eta_2=b_2(0)$. Let $\wh{\C}_3= \wh{\varphi}_{\eta_2}(\C_2)$,
see equation~\eqref{shifting} for the definition of $\wh{\varphi}_{\eta_2}$. Then, $\wh{\C}_3$ is a
rotationally invariant catenoid whose rotational axis is the line $(\eta_2, 0)\times \BR$ (Figure~\ref{strips}-left).

\begin{lemma}
The intersection $\wh{\C}_3\cap \Omega$ is a pair of infinite strips.
\end{lemma}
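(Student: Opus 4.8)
The plan is to mimic the proof of Claim~\ref{infinitestrip}, working slice by slice at height $t\in\mathbb{R}$. First I would set up notation as before: let $\tau^i_t:=\C_i\cap\BH^2_t$ be the circle of radius $b_i(t)$ centered at the origin of $\BH^2_t$, and let $\sigma_t:=\wh{\C}_3\cap\BH^2_t$ be the circle of radius $b_2(t)$ centered at $q_t:=(\eta_2,0,t)$. Since $\wh{\C}_3$ is obtained from $\C_2$ by the horizontal translation $\wh{\varphi}_{\eta_2}$, both $\wh{\C}_3$ and $\C_2$ have the same ``profile'' radii $b_2(t)$ at each height, so the two circles $\tau^2_t$ and $\sigma_t$ have equal radius but distinct centers (distance $\eta_2$ apart); hence they always meet in exactly two points, and $\sigma_t$ crosses from inside the disk bounded by $\tau^2_t$ to outside it. The goal at each slice is to show that $\sigma_t\cap\Omega$ consists of exactly two disjoint arcs, each with one endpoint on $\tau^1_t$ and one on $\tau^2_t$; gluing over all $t$ then produces two infinite strips.

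The key step is to understand how $\sigma_t$ sits relative to the inner circle $\tau^1_t$ of radius $b_1(t)$. Here the situation differs from Claim~\ref{infinitestrip}: because $\eta_2=b_2(0)$ is comparatively large, the translated circle $\sigma_t$ reaches close enough to — indeed strictly inside — the inner disk bounded by $\tau^1_t$, so that $\sigma_t\cap\tau^1_t\neq\emptyset$ for every $t$. I would verify this by a hyperbolic-distance estimate: the point of $\sigma_t$ nearest the origin of $\BH^2_t$ lies at distance $\eta_2-b_2(t)\le\eta_2-b_2(0)=0$... more precisely one wants $d(\vec 0,q_t)-b_2(t)=\eta_2-b_2(t)$, and since $b_2(t)\ge b_2(0)=\eta_2$ for all $t$ (as $b_2$ is increasing in $|t|$), this nearest point is at or inside the origin; meanwhile the farthest point of $\sigma_t$ from $\vec 0$ is at distance $\eta_2+b_2(t)>b_2(t)\ge b_1(t)$, so it lies outside $\tau^1_t$. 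Thus $\sigma_t$ genuinely straddles $\tau^1_t$, and since the portion of $\sigma_t$ near $q_t$ lies outside the disk bounded by $\tau^1_t$, the circle $\sigma_t$ meets $\tau^1_t$ in exactly two points. Combined with the two transverse intersections with $\tau^2_t$, the circle $\sigma_t$ is cut into arcs, exactly two of which lie in the closed annular region $\{b_1(t)\le\rho\le b_2(t)\}=\Omega\cap\BH^2_t$, each running from $\tau^1_t$ to $\tau^2_t$.

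Finally I would assemble the slices: the two arcs depend continuously on $t$ (the catenoid profiles $b_i$ are smooth), their endpoints trace curves on $\C_1$ and $\C_2$, and letting $t$ range over all of $\mathbb{R}$ we obtain two embedded infinite strips comprising $\wh{\C}_3\cap\Omega$; their disjointness follows from the fact that the two arcs in each slice are disjoint (they lie in the two distinct lens-shaped regions cut out by $\sigma_t$, $\tau^1_t$, $\tau^2_t$). The main obstacle I anticipate is the bookkeeping needed to confirm that exactly two of the arcs of $\sigma_t\setminus(\tau^1_t\cup\tau^2_t)$ lie in $\Omega$ — one must rule out degenerate configurations (tangencies, or an arc of $\sigma_t$ dipping below $b_1(t)$ and back in a way that creates extra components). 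This is handled by the monotonicity $b_2(t)\ge\eta_2=b_2(0)$ together with the strict inequalities $b_1(t)<b_2(t)$ from Lemma~\ref{disjointlem} and the fact that the four intersection points of $\sigma_t$ with $\tau^1_t\cup\tau^2_t$ occur in the cyclic order (inner, outer, outer, inner) around $\sigma_t$, which is exactly the configuration yielding two $\Omega$-arcs.
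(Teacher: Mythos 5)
Your slice-by-slice setup is the same as the paper's, and your treatment of $\sigma_t\cap\tau^2_t$ is fine (two equal circles whose centers are at distance $\eta_2\le b_2(t)<2b_2(t)$ apart must cross twice). But the argument for $\sigma_t\cap\tau^1_t$ has a genuine gap. Two circles with center distance $D=\eta_2$ and radii $r_1=b_1(t)$, $r_2=b_2(t)$ meet in exactly two points iff $|r_2-r_1|<D<r_1+r_2$. Your two observations --- the nearest point of $\sigma_t$ to the origin has signed coordinate $\eta_2-b_2(t)\le 0$, and the farthest point is at distance $\eta_2+b_2(t)>b_1(t)$ --- only rule out the configurations where the circles are externally disjoint or where $\sigma_t$ lies inside the disk bounded by $\tau^1_t$. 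They do \emph{not} rule out the third disjoint configuration, in which $\tau^1_t$ is entirely contained in the disk bounded by $\sigma_t$: the nearest point of $\sigma_t$ to the origin sits at distance $b_2(t)-\eta_2$ from the origin on the far side, and ``at or past the origin'' is not the same as ``inside the disk of radius $b_1(t)$.'' Since $b_2(t)\to\infty$, this nearest point runs off to infinity with $t$, so the claim that $\sigma_t$ ``reaches strictly inside the inner disk'' is exactly what needs proof, namely $b_2(t)-\eta_2<b_1(t)$, i.e.\ $\eta_2>b_2(t)-b_1(t)$ for all $t$.

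That inequality is not a formality: it is supplied by the second half of Lemma~\ref{disjointlem}, which says $\sup_{t}(b_2(t)-b_1(t))=\eta_2-\eta_1<\eta_2$, and this is precisely the step the paper's proof invokes. You cite Lemma~\ref{disjointlem} only for the strict separation $b_1(t)<b_2(t)$, which is too weak; if $b_2(t)-b_1(t)$ were allowed to grow past $\eta_2$ for large $|t|$, the circle $\sigma_t$ would engulf $\tau^1_t$ there and the intersection $\wh{\C}_3\cap\Omega$ would fail to be a pair of strips running over all heights. Once you add the estimate $\eta_2-b_2(t)>-b_1(t)$ via the sup statement of Lemma~\ref{disjointlem}, your arc-counting and gluing argument goes through and coincides with the paper's proof.
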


\begin{proof} It suffices to show that $\wh{\C}_3\cap \C_1$ and $\wh{\C}_3\cap \C_2$ each consists of a pair
of infinite lines. Now, consider the horizontal circles $\tau^1_t, \tau^2_t$, and $\wh{\tau}^3_t$ in the
intersection of $\BH^2_t$ and $\C_1,\C_2$, and $\wh{\C}_3$ respectively, where $\BH^2_t= \BH^2\times\{t\}$.
For any $t\in\BR$, $\tau^i_t$ is a circle of radius $b_i(t)$ in $\BH^2_t$ with center $(0,0,t)$. Similarly,
$\wh{\tau}^3_t$ is a circle of radius $b_2(t)$ in $\BH^2_t$ with center $(\eta_2,0,t)$, see Figure~\ref{strips}-right.
  Hence, it suffices to show that for any $t\in\BR$ each of the
intersection $\tau^1_t\cap\wh{\tau}^3_t$ and $\tau^2_t\cap\wh{\tau}^3_t$ consists of two points.

\begin{figure}[h]
\begin{center}
$\begin{array}{c@{\hspace{.2in}}c}

\relabelbox  {\epsfysize=2in \epsfbox{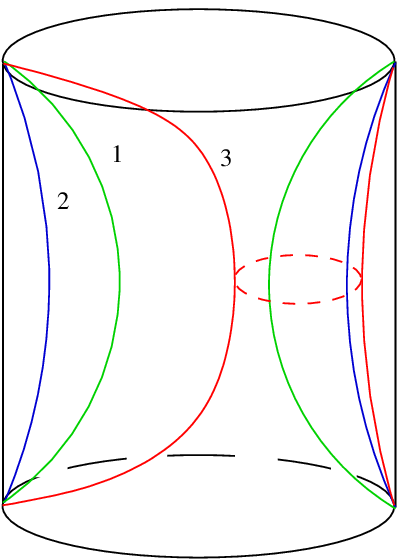}}
\relabel{1}{\footnotesize $\C_1$}
\relabel{2}{\footnotesize $\C_2$}
\relabel{3}{\footnotesize $\wh{\C}_3$}
\endrelabelbox

&

\relabelbox  {\epsfysize=2in \epsfbox{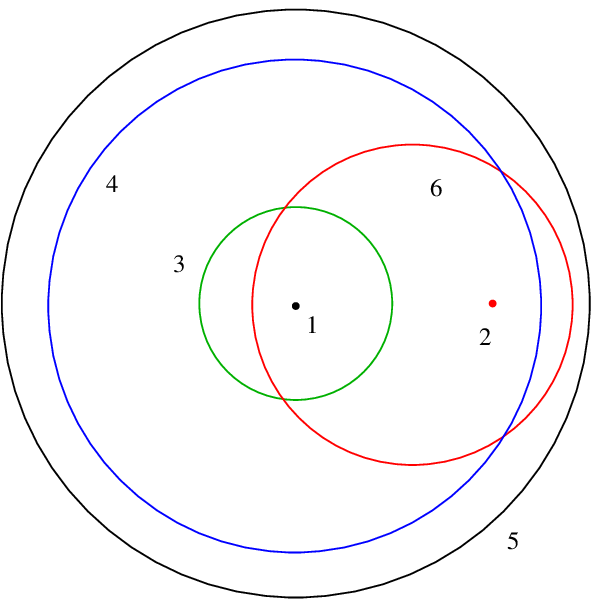}}
\relabel{1}{\footnotesize $O$}
\relabel{2}{$C$}
\relabel{3}{\footnotesize $\tau^1_t$}
\relabel{4}{\footnotesize $\tau^2_t$}
\relabel{5}{\footnotesize $\BH^2_t$}
\relabel{6}{\footnotesize $\wh{\tau}^3_t$}
\endrelabelbox \\ [0.4cm]

\end{array}$

\end{center}

\caption{\label{strips} $\tau^i_t=\C_i\cap \BH^2_t$ is a round circle of
radius $b_i(t)$ with center $O$. $\wh{\tau}^3_t=\wh{\C}_3\cap \BH^2_t$ is a round circle of radius $b_2(t)$ with center $C=(\eta_2,0,t)$.}

\end{figure}

By construction, it is easy to see $\tau^2_t\cap\wh{\tau}^3_t$ consists of two points. This is
because $\tau^2_t$ and $\wh{\tau}^3_t$ have the same radius, $b_2(t)$ and $\eta_2+b_2(t)>b_2(t)$
and $\eta_2-b_2(t)>-b_2(t)$. Therefore, it remains to show that $\tau^1_t\cap\wh{\tau}^3_t$  consists of
two points. By construction, this would be the case if $\eta_2-b_2(t)<b_1(t)$ and $\eta_2-b_2(t)>-b_1(t)$.
The first inequality follows because $\eta_2=\inf_{t\in\BR}b_2(t)$.  The second inequality follows from
Lemma~\ref{disjointlem} because
$$
\eta_2>\eta_2-\eta_1=\sup_{t\in\BR}(b_2(t)-b_1(t)).
$$ \par \vspace{-.2cm}
\end{proof}

Now, let $\wh{\C}_3\cap \Omega = T^+\cup T^-$, where $T^+$ is the infinite strip with $\theta\in(0,\pi)$,
and $T^-$ is the infinite strip with $\theta\in(-\pi,0)$. Note that $T^\pm$ is a $\theta$-graph over the
infinite strip $\wh{\p}_0=\Omega\cap \p_0$ where $\p_0$ is the half plane $\{\theta=0\}$. Let $\V$ be the component of
$\Omega-\wh{C}_3$ containing $\wh{\p}_0$. Notice that the mean curvature vector $\HH$ of $\partial \V$ points into $\V$
on both $T^+$ and $T^-$.

Consider the lifts of $T^+$ and $T^-$ in $\wt{\Omega}$. For $n\in\BZ$, let $\wt{T}^+_n$ be the lift of $T^+$
which belongs to the region $\wt{\theta}\in (2n\pi, (2n+1)\pi)$. Similarly, let $\wt{T}^-_n$ be the lift
of $T^-$ which belongs to the region $\wt{\theta}\in ((2n-1)\pi, 2n\pi)$. Let $\V_n$ be the closed region
in $\wt{\Omega}$ between the infinite strips $\wt{T}^-_{-n}$ and $\wt{T}^+_n$.  Notice that for $n$
sufficiently large, $\B^\pm\subset \V_n$.

Next we define the compact exhaustion $\Delta_n$ of $\wt{\Omega}^*$ as follows:  $\Delta_n:=\wt{\U}_n\cap \V_n$.
Furthermore, the absolute value of the mean curvature of $\partial \Delta_n$ is equal to $H$ and the
mean curvature vector $\HH$ of $\partial \Delta_n$ points into $\Delta_n$
on $\partial \Delta_n-[(\partial \Delta_n\cap \wt{\C}_1)\cup \B^-]$.

\subsection{The sequence of $H$-surfaces}

We next define a sequence of compact $H$-surfaces $\{\Sigma_n\}_{n\in \N}$ where $\Sigma_n\subset \Delta_n$.
For each $n$ sufficiently large, we define a simple closed curve $\Gamma_n$ in $\partial \Delta_n$, and then
we solve the $H$-Plateau problem for $\Gamma_n$ in $\Delta_n$. This will provide an embedded $H$-surface
$\Sigma_n$ in $\Delta_n$ with $\partial \Sigma_n=\Gamma_n$ for each $n$.

\vspace{.2cm}

\noindent {\em The Construction of $\Gamma_n$ in $\partial \Delta_n\colon$}

\vspace{.2cm}

First, consider the annulus $\A_n=\partial \Delta_n-(\wt{\C}_1\cup\wt{\C}_2\cup\B^+\cup\B^-)$ in
$\partial \Delta_n$. Let $\wh{l}_n^+= \wt{\C}_1\cap \wt{T}^+_n$, and $\wh{l}_n^-= \wt{\C}_2\cap \wt{T}^-_{-n}$
be the pair of infinite lines in $\wt{\Omega}$.
Let $l^\pm_n=\wh{l}^\pm_n\cap \A_n$. Let $\mu_n^+$ be an
 arc in $\wt{S}^+_n\cap \A_n$, whose $\wt{\theta}$ and $\rho$ coordinates are strictly increasing as a function of the
  parameter and whose endpoints are $l^+_n\cap \wt{S}^+_n$ and $l^-_n\cap \wt{S}^+_n$ (Figure~\ref{gamma_n}-left).
Similarly, define $\mu_n^-$ to be a monotone arc in $\wt{S}^-_n\cap \A_n$ whose endpoints are $l^+_n\cap \wt{S}^-_n$
and $l^-_n\cap \wt{S}^-_n$.
Note that these arcs
$\mu^+_n$ and $\mu^-_n$ are by construction disjoint from the infinite
bumps $\B^\pm$. Then, $\Gamma_n=\mu^+_n\cup l^+_n \cup \mu^-_n\cup l^-_n$ is a simple closed curve
in $\A_n\subset \partial \Delta_n$ (Figure~\ref{gamma_n}-right).

\begin{figure}[h]
\begin{center}
$\begin{array}{c@{\hspace{.2in}}c}

\relabelbox  {\epsfxsize=2.5in \epsfbox{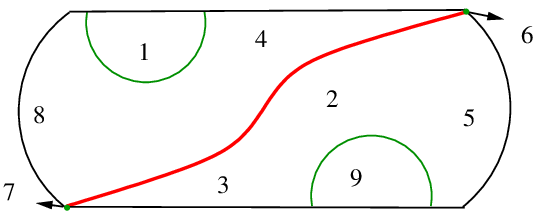}}
\relabel{1}{\footnotesize $\B^+$}
\relabel{2}{\footnotesize $\mu_n^+$}
\relabel{3}{\footnotesize $\wt{\C}_2$}
\relabel{4}{\footnotesize $\wt{\C}_1$}
\relabel{5}{\footnotesize $\wt{T}^+_n$}
\relabel{6}{\footnotesize $l^+_n$}
\relabel{7}{\footnotesize $l^-_n$}
\relabel{8}{\footnotesize $\wt{T}^-_{-n}$}
\relabel{9}{\footnotesize $\B^-$}
\endrelabelbox &

\relabelbox  {\epsfxsize=2.5in \epsfbox{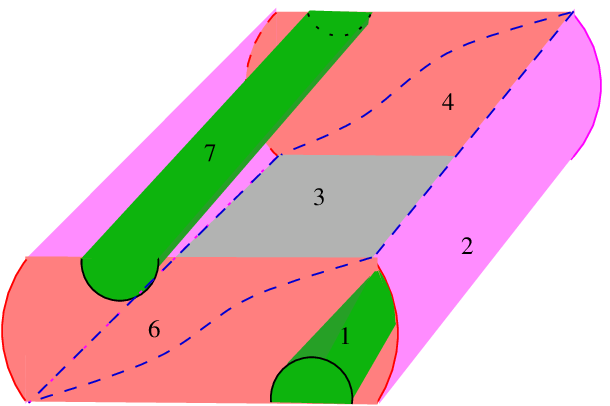}}
\relabel{1}{\footnotesize $\B^-$}
\relabel{2}{\footnotesize $\wt{T}^+_n$}
\relabel{3}{\footnotesize $\Delta_n$}
\relabel{4}{\footnotesize $\wt{S}^-_n$}
\relabel{6}{\footnotesize $\Gamma_n$}
\relabel{7}{\footnotesize $\B^+$}
\endrelabelbox \\ [0.4cm]

\end{array}$

\end{center}

\caption{\label{gamma_n} In the left, $\mu^n_+$  is
pictured in $\wt{S}^+_n$. On the right, the curve $\Gamma_n$  is described in
$\partial \Delta_n$.}

\end{figure}

Next, consider the following variational problem ($H$-Plateau problem):
Given the simple closed curve $\Gamma_n$ in $\A_n$, let $M$ be
a smooth compact embedded surface in $\Delta_n$
with $\partial M=\Gamma_n$. Since $\Delta_n$ is simply-connected, $M$ separates
$\Delta_n$ into two regions. Let $Q$ be the region in $\Delta_n-\Sigma$
with $Q\cap \wt{\C}_2\neq \emptyset$, the ``upper'' region.
Then define the functional $\I_H=\mbox{\rm Area}(M)+2H\,\mbox{\rm Volume}(Q)$. 

By working with integral currents, it is known that
there  exists a smooth (except at the 4 corners of $\Gamma_n$), compact,
embedded $H$-surface $\Sigma_n\subset \Delta_n$ with
$\text{Int}(\Sigma_n)\subset \text{Int}(\Delta_n)$ and
$\partial\Sigma_n=\Gamma_n$. Note that in our setting, $\Delta_n$ is not $H$-mean convex along $\Delta_n\cap \wt{\C}_1$. However, the mean curvature vector along $\Sigma_n$ points outside $Q$ because of the construction of the variational problem. Therefore $\Delta_n\cap \wt{\C}_1$ is still a good barrier for solving the $H$-Plateau problem. 
In fact,  $\Sigma_n$ can be chosen to be,
and we will assume it is, a minimizer for this variational
problem, i.e.,
$I(\Sigma_n)\leq I(M)$ for any
$M\subset \Delta_n$ with $\partial M =\Gamma_n$;
see for instance~\cite[Theorem 2.1]{ton1} and~\cite[Theorem 1]{alr1}.
In particular, the fact that $\text{Int}(\Sigma_n)\subset \text{Int}(\Delta_n)$
is proven in Lemma 3 of~\cite{gu2}.
Moreover,  $\Sigma_n$ separates $\Delta_n$ into two
regions.

Similarly to Lemma~4.1 in~\cite{cmt1}, in the following lemma we show that for any
such $\Gamma_n$, the minimizer surface $\Sigma_n$ is a  $\wt{\theta}$-graph.

\begin{lemma} \label{translation} Let $E_n:=\A_n\cap \wt{T}^+_n$. The minimizer surface  $\Sigma_n$ is a
 $\wt{\theta}$-graph over the compact disk $E_n$. In particular, the related
 Jacobi function $J_n$ on $\Sigma_n$ induced by the inner product of
 the unit normal field to $\Sigma_n$ with the Killing field
 $ \partial_{\wt{\theta}}$
 is positive in the interior of $\Sigma_n$.
\end{lemma}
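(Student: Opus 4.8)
The plan is to use the Alexandrov reflection (moving planes) method with respect to the foliation of $\wt{\Omega}$ by the half-planes $\{\wt\theta = c\}$, $c\in\R$, which are minimal (indeed totally geodesic) and invariant under the flow of the Killing field $\partial_{\wt\theta}$. First I would record the key geometric input coming from the construction: the boundary curve $\Gamma_n$ is, by construction, a $\wt\theta$-graph over its projection to the totally geodesic half-plane $\p_0=\{\wt\theta=0\}$ — the arcs $\mu_n^\pm$ have strictly increasing $\wt\theta$-coordinate and the lines $l_n^\pm$ lie in $\wt{\C}_1$, $\wt{\C}_2$, whose intersections with the slices $\{\wt\theta=c\}$ are single arcs. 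Moreover $\Gamma_n\subset\A_n$ and $\A_n$ is a $\wt\theta$-graph over $E_n$. Since the infinite bumps $\B^\pm$ have been removed from $\wt\U_n$, the region $\Delta_n$ lies in a slab $\wt\theta\in[\alpha_n,\beta_n]$ of bounded width, and the ``vertical'' boundary pieces $\wt{S}^\pm_n$, $\wt{\C}_1$, $\wt{\C}_2$ of $\partial\Delta_n$ are all $\wt\theta$-graphs over $E_n$ as well.

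The core of the argument is the reflection. For $c$ large (near $\beta_n$), the reflected copy of the part of $\Sigma_n$ with $\wt\theta\ge c$ across $\{\wt\theta=c\}$ is empty or disjoint from $\Sigma_n$; then I decrease $c$. At a first contact value $c_0$, either $\Sigma_n$ and its reflection are tangent at an interior point, or they meet tangentially at a boundary point, or $\Sigma_n$ becomes tangent to $\{\wt\theta=c_0\}$ at an interior point. In all three cases the mean curvature comparison principle (Proposition~\ref{max}), applied with the two $H$-surfaces $\Sigma_n$ and its mirror image (both with mean curvature $H$, hence the equality case forces them to coincide), yields that $\Sigma_n$ is symmetric under reflection across $\{\wt\theta=c_0\}$; but that is impossible because $\partial\Sigma_n=\Gamma_n$ is a graph and cannot be invariant under a nontrivial reflection in the $\wt\theta$-direction, since the slab containing $\Delta_n$ would have to be symmetric about $c_0$ and $\Gamma_n$ would have to be symmetric, contradicting strict monotonicity of $\wt\theta$ along $\mu_n^\pm$. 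To start the reflection and to keep it valid one uses the boundary maximum principle, for which it is crucial that along the parts of $\partial\Delta_n$ other than $\wt{\C}_1$ and $\B^-$ the mean curvature vector of $\partial\Delta_n$ points into $\Delta_n$ (recorded just before this subsection), so $\Sigma_n$ cannot touch those faces tangentially from the inside; and along $\wt{\C}_1$, $\B^-$ one uses instead that, by the variational construction, the mean curvature vector of $\Sigma_n$ points out of the region $Q$, so $\Sigma_n$ stays on the correct side. Running the reflection from both ends ($c\downarrow$ from $\beta_n$ and $c\uparrow$ from $\alpha_n$) shows that $\Sigma_n$ meets every slice $\{\wt\theta=c\}$, $c\in(\alpha_n,\beta_n)$, transversally in a single arc, i.e. $\Sigma_n$ is a $\wt\theta$-graph over a domain in $\p_0$; since $\partial\Sigma_n=\Gamma_n$ projects onto $\partial E_n$ and $E_n$ is a disk, that domain is exactly $E_n$.

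For the last sentence: once $\Sigma_n$ is a $\wt\theta$-graph, the function $J_n=\langle N,\partial_{\wt\theta}\rangle$ is nowhere zero on $\Int(\Sigma_n)$ (transversality to the slices), hence has a fixed sign, which we normalize to be positive; and $J_n$ is a Jacobi field because $\partial_{\wt\theta}$ is a Killing field of $\BHH$, so its normal component along any $H$-surface lies in the kernel of the stability (Jacobi) operator. The main obstacle is the boundary analysis: because $\Delta_n$ fails to be $H$-mean convex along $\Delta_n\cap\wt{\C}_1$ (and along $\B^-$), one cannot invoke the reflection principle naively there, and one must carefully combine the barrier property of $\wt{\C}_1$, $\B^-$ for the variational problem with the Hopf boundary lemma at the corners of $\Gamma_n$ — this is exactly the point where the argument parallels, and must be adapted from, Lemma~4.1 of~\cite{cmt1}.
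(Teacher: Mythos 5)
Your moving-planes argument is a genuinely different route from the paper's proof, which is a \emph{sliding} argument rather than a reflection argument: the paper applies the $\wt\theta$-translations $T_\alpha(\rho,\wt\theta,t)=(\rho,\wt\theta+\alpha,t)$ to $\Sigma_n$ and shows $T_\alpha(\Sigma_n)\cap\Sigma_n=\emptyset$ for all $\alpha\neq 0$ by taking the largest $\alpha'$ with nonempty intersection; since $T_{\alpha'}(\Gamma_n)$ is disjoint from $\Sigma_n$ (because $\Gamma_n$ is a strict $\wt\theta$-graph and $\mbox{Int}(\Sigma_n)\subset\mbox{Int}(\Delta_n)$), the first contact is an interior--interior one-sided tangency and Proposition~\ref{max} gives an immediate contradiction. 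Graphicality over exactly $E_n$ is then deduced from the fact that every integral curve of $\partial_{\wt\theta}$ through $E_n$ has non-zero intersection number with any surface bounded by $\Gamma_n$. The chief advantage of sliding over your reflection scheme is that it avoids all boundary interaction: in the reflection across $\{\wt\theta=c\}$ the reflected and unreflected pieces of $\Sigma_n$ are glued along $\Sigma_n\cap\{\wt\theta=c\}$ for \emph{every} $c$, so the ``first contact'' analysis must separately handle tangency of the two pieces along the reflection plane (the orthogonality/Serrin-corner case), which your trichotomy does not quite state correctly (your third case, tangency of $\Sigma_n$ to the slice, is the opposite degeneration). Your scheme does work here because the walls $\wt{\C}_1,\wt{\C}_2,\wt{S}_n^{\pm}$ containing $\Gamma_n$ are invariant under the reflections, so reflected boundary arcs cannot hit $\mbox{Int}(\Sigma_n)$, but it is strictly more delicate than what the paper does. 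Two smaller points: set-theoretic graphicality gives only $J_n\geq 0$, not transversality, so the positivity of $J_n$ should be deduced (as in the paper) from the dichotomy for non-negative Jacobi functions, $J_n\equiv 0$ or $J_n>0$, together with $J_n$ being positive somewhere; and the claim that the graph is over all of $E_n$ with a single intersection point per integral curve needs the intersection-number argument, not just that $\Gamma_n$ projects onto $\partial E_n$.
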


\begin{proof} The proof is almost identical to the proof of Lemma~4.1 in~\cite{cmt1},
and for the sake of completeness, we give it here. Let  $T_\alpha$ be the isometry
of $\wt{\Omega}$ which is a translation by $\alpha$ in the $\wt{\theta}$ direction, i.e.,
\begin{equation}\label{transl}
T_{\alpha}(\rho, \wt{\theta}, t)=(\rho, \wt{\theta}+\alpha, t).
\end{equation}
Let $T_\alpha(\Sigma_n)=\Sigma^\alpha_n$ and
$T_\alpha(\Gamma_n)=\Gamma^\alpha_n$. We claim that
$\Sigma^\alpha_n\cap\Sigma_n=\emptyset $
for any $\alpha \in \mathbb{R}\setminus\{0\}$ which implies that $\Sigma_n$ is a
 $\wt{\theta}$-graph; we will use that   $\Gamma^\alpha_n $ is disjoint from $\Sigma_n$
 for any $\alpha \in \mathbb{R}\setminus\{0\}$.

Arguing by contradiction, suppose that $\Sigma^\alpha_n\cap\Sigma_n\neq\emptyset $  for a certain
$\alpha\neq 0$.   By compactness of $\Sigma_n$,
there exists a largest positive number $\alpha'$ such that
$\Sigma^{\alpha'}_n\cap\Sigma_n\neq \emptyset $.
Let $p\in \Sigma^{\alpha'}_n\cap\Sigma_n$. Since
$\partial \Sigma^{\alpha'}_n \cap \partial \Sigma_n =\emptyset $
and the interior of $\Sigma_n$, respectively
 $\Sigma^{\alpha'}_n$, lie in the interior of $\Delta_n$, respectively $T_{\alpha'}(\Delta_n)$, then
$p\in \Int(\Sigma^{\alpha'}_n) \cap \Int(\Sigma_n)$.
Since the surfaces $\Int(\Sigma^{\alpha'}_n)$,
$\Int(\Sigma_n)$ lie on one side of each other
and intersect tangentially at the point $p$ with
the same mean curvature vector,
then we obtain a contradiction to the mean curvature comparison
principle for constant mean curvature surfaces, see Proposition~\ref{max}. This proves that $\Sigma_n$ is
graphical over its $\wt \theta$-projection to
$E_n$.

Since by construction every integral curve,
$(\overline \rho,s,\overline t)$ with $\overline \rho, \overline t$
fixed and  $(\overline \rho,s_0, \overline t)\in E_n$ for a certain $s_0$,
of the Killing field $\partial _{\wt{\theta}}$ has
non-zero intersection number with any compact
surface  bounded by $\G_n$, we conclude that
every such integral curve intersects both the
disk $E_n$ and $\Sigma_n$ in single points.
This means that   $\Sigma_n$ is a $\wt{\theta}$-graph over
$E_n$ and thus the related Jacobi function
$J_n$ on $\Sigma_n$ induced by the inner product of
 the unit normal field to $\Sigma_n$ with the
 Killing field $\partial _{\wt{\theta}}$
 is non-negative in the interior of $\Sigma_n$. Since $J_n$ is a
 non-negative Jacobi function, then either
 $J_n\equiv 0$ or $J_n>0$. Since by construction $J_n$ is positive somewhere
 in the interior, then $J_n$ is positive everywhere in the interior.
 This finishes the proof of the lemma.
\end{proof}

\section{The proof of Theorem~\ref{main}}

With $\Gamma_n$ as previously
described, we have so far constructed a sequence
of  compact stable $H$-disks $\Sigma_n$
with $\partial \Sigma_n = \Gamma_n \subset \partial \Delta_n$.
Let $J_n$ be the related non-negative Jacobi function described in Lemma~\ref{translation}.

By the curvature estimates for stable $H$-surfaces given  in~\cite{rst1},
the norms of the second fundamental forms of the $\Sigma_n$ are uniformly
bounded from above at points which are at intrinsic distance at least one
from their boundaries. Since the boundaries of the
 $\Sigma_{n}$ leave every compact subset of $\wt{\Omega}^*$, for each compact set of $\wt{\Omega}^*$,
 the norms of the
 second fundamental forms of the $\Sigma_n$ are uniformly
bounded for values $n$ sufficiently large and such a bound does not depend on the
chosen compact set. Standard compactness arguments give that, after passing to a
subsequence, $\Sigma_n$ converges to a (weak) $H$-lamination $\wt{\cL}$ of $\wt{\Omega}^*$
and the leaves of $\wt{\cL}$ are complete and have uniformly bounded norm of their
second fundamental forms, see for instance~\cite{mr13}.

Let $\beta$ be a compact embedded arc contained in $\wt{\Omega}^*$ such that its
end points $p_+$ and $p_-$ are contained respectively in $\B^+$ and $\B^-$, and
such that these are the only points in the intersection $[\B^+\cup\B^-]\cap \beta$.
Then, for $n$-sufficiently large, the linking number between $\Gamma_n$ and $\beta$
is one, which gives that, for $n$ sufficiently large, $\Sigma_n$ intersects $\beta$
in an odd number of points. In particular $\Sigma_n\cap \beta\neq \emptyset$ which
implies that the lamination $\wt{\cL}$ is not empty.
 \begin{remark}\label{rmkbumps2}
 By  Remark~\ref{rmkbumps}, a leaf of $\wt{\cL}$ that is invariant
 with respect to $\wt{\theta}$-translations cannot be contained in $\wt{\Omega}^*$.
 Therefore none of the leaves of $\wt{\cL}$ are invariant with respect to $\wt{\theta}$-translations.
 \end{remark}

 Let $\wt L$ be a leaf of $\wt{\cL}$ and let $J_{\wt L}$ be the Jacobi function
 induced by taking the
inner product of $\partial_{\wt{\theta}}$ with the unit normal of $\wt L$.  Then,
by the nature of the convergence, $J_{\wt L}\geq 0$ and therefore since it is a Jacobi
field, it is either positive or identically zero. In the latter case, $\wt{\cL}$ would
be invariant with respect to $\wt{\theta}$-translations, contradicting Remark~\ref{rmkbumps2}.
Thus, by Remark~\ref{rmkbumps2}, we have that $J_{\wt L}$ is positive and therefore $\wt L$
is a Killing graph with respect to
$\partial_{\wt{\theta}}$.

\begin{claim}\label{propembleaf}
Each leaf $\wt L$ of $\wt{\cL}$ is properly embedded in $\wt \Omega^*$.
\end{claim}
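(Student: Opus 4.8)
The plan is to show that each leaf $\wt L$ of $\wt{\cL}$ is a complete Killing $\wt{\theta}$-graph over a domain in the half-plane $\p_0 = \{\theta = 0\}$, and to exploit this graphical structure together with the barriers $\B^\pm$ and $\wt\C_1, \wt\C_2$ to preclude the only way properness could fail, namely that $\wt L$ limits onto itself.

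First I would use what has already been established: $\wt L$ is complete, has bounded norm of the second fundamental form, and is a Killing graph with respect to $\partial_{\wt\theta}$ (equivalently, the Jacobi function $J_{\wt L} > 0$). Thus $\wt L$ projects injectively onto an open connected domain $D \subset \p_0 \cap \wt\Omega^*$, and the closure $\ov{\wt L}$ in $\wt\Omega^*$ is a sublamination: any leaf $\wt L'$ in $\ov{\wt L} \setminus \wt L$ is again a Killing graph, hence projects to a domain $D' \subset \ov D$. Since $\wt L$ is itself a leaf of the lamination $\wt{\cL}$, it is relatively closed in $\wt\Omega^*$ precisely when no other leaf of $\wt{\cL}$ accumulates onto it; the content of the claim is to rule out self-accumulation and accumulation by other leaves. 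The key geometric input is that $\wt\Omega^*$ is obtained from the slab-like region $\wt\Omega$ by removing the infinite bumps $\B^\pm$, and that every leaf $\wt L$ has the two infinite $H$-strips $\wt\C_1, \wt\C_2$ in its boundary at infinity in the $\rho$-direction — I would argue, as in the graphical curvature-estimate setup, that the $t$-coordinate on $\wt L$ need not be bounded, but the $\wt\theta$-coordinate behaviour is controlled because $\wt L$ is squeezed between translates of $\B^+$ and $\B^-$.

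The main step is the following: suppose a leaf $\wt L'$ of $\wt{\cL}$ (possibly $\wt L' = \wt L$) accumulates at a point $q \in \wt L$. Working in a uniform regular neighborhood (which exists by the uniform curvature bound), the local picture near $q$ is a stack of graphs over a small disk in $T_q\wt L$, and since all leaves are $\partial_{\wt\theta}$-graphs, these local sheets are ordered by their $\wt\theta$-values. I would then run a standard foliation/flow argument: translating $\wt L$ by $T_\alpha$ (the $\wt\theta$-translation from \eqref{transl}) and using that $\Sigma_n$ is a $\wt\theta$-graph disjoint from its nontrivial translates (Lemma~\ref{translation}), one gets in the limit that $\wt L$ is disjoint from $T_\alpha(\wt L)$ for all $\alpha \ne 0$; combined with the local ordering of sheets this forces the accumulation to occur ``from one side'' and produces a limit leaf $\wt L_\infty$ that is stable, complete, and — being a one-sided limit of $\wt\theta$-translates, or of distinct leaves squeezed together — has a nowhere-zero Jacobi function in $J_{\wt\theta}$ that is also a limit of $\wt\theta$-distances, hence $\wt L_\infty$ would be invariant under $\wt\theta$-translations. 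This contradicts Remark~\ref{rmkbumps2}. Therefore no leaf of $\wt{\cL}$ accumulates onto $\wt L$, so $\wt L$ is relatively closed, and being a leaf of a lamination with locally bounded geometry it is properly embedded in $\wt\Omega^*$.

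The hard part will be making rigorous the passage ``self-accumulation of a graphical leaf $\Rightarrow$ existence of a $\wt\theta$-translation-invariant limit leaf.'' One must be careful that the accumulation could a priori escape to infinity in the $t$-direction rather than producing a genuine limit leaf inside $\wt\Omega^*$; here I would invoke the uniform second fundamental form bound to extract, after translating back, a convergent subsequence of pieces of $\wt L$ and argue that the limit is a nonempty leaf whose separation-distance-in-$\wt\theta$ Jacobi function vanishes, so that the limit is ruled translation-invariant and lies in $\wt\Omega^*$, the desired contradiction with Remark~\ref{rmkbumps2}. A secondary technical point is to confirm that the bumps $\B^\pm$, having been removed to form $\wt\Omega^*$, genuinely trap every leaf between a top and a bottom translate so that the $\wt\theta$-extent interacts correctly with the above limit argument; this is exactly the role the bumps were designed to play, as noted in the text preceding the construction of $\wt\Omega^*$.
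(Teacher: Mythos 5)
Your overall framework---pass to the closure of $\wt L$ in $\wt \Omega^*$, extract a limit leaf, examine the sign of its Jacobi function $J=\langle N,\partial_{\wt{\theta}}\rangle$, and invoke Remark~\ref{rmkbumps2}---is the same as the paper's, but the core of the argument is missing. For the limit leaf $\Lambda$ the dichotomy is: either $J_{\Lambda}\equiv 0$ or $J_{\Lambda}>0$. You effectively treat only the first case, and you reach it by an incorrect deduction: you assert that the Jacobi function of the limit leaf ``is a limit of $\wt\theta$-distances'' between the accumulating sheets and hence vanishes, forcing $\Lambda$ to be $\wt\theta$-translation-invariant. But $J_{\Lambda}$ is the limit of the functions $\langle N,\partial_{\wt{\theta}}\rangle$ on the approximating sheets of $\wt L$, not of their mutual separations; there is no reason for it to vanish, and a priori $\Lambda$ can perfectly well be a Killing graph. (Your phrase ``has a nowhere-zero Jacobi function \ldots hence $\wt L_\infty$ would be invariant under $\wt\theta$-translations'' is internally inconsistent: translation-invariance is exactly the condition $J\equiv 0$.)

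The missing case $J_{\Lambda}>0$ is where the paper's key idea lies, and Remark~\ref{rmkbumps2} cannot dispose of it. When $\Lambda$ is itself a Killing graph, its $\wt\theta$-projection is open, so a fixed neighborhood $U_p\subset \Lambda$ of an accumulation point projects onto an open set; the infinitely many pairwise disjoint sheets $U_{p_n}\subset \wt L$ converging to $U_p$ then all project onto sets containing a common open subset for $n$ large, and two disjoint pieces of $\wt L$ with overlapping $\wt\theta$-projections contradict the previously established fact that $\wt L$ is a global $\wt\theta$-graph. This overlap argument genuinely requires $J_{\Lambda}>0$: if $\Lambda$ were translation-invariant its projection would be one-dimensional and the sheet projections could shrink onto it disjointly---exactly what happens for the non-proper surface $\cP_H$ downstairs in $\Omega$---so the case split is essential and your proof does not close without it. A minor additional point: your concern that the accumulation might ``escape to infinity in the $t$-direction'' is vacuous, since non-properness in $\wt \Omega^*$ means by definition that $\wt L$ has an accumulation point inside some compact subset of $\wt \Omega^*$.
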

\begin{proof}
Arguing by contradiction, suppose there exists a leaf  $\wt L$ of $\wt{\cL}$ that is
NOT proper in $\wt \Omega^*$. Then, since the leaf $\wt L$ has uniformly bounded norm
of its second fundamental form, the closure of $\wt L$ in $\wt \Omega^*$ is a lamination
of $\wt \Omega^*$ with a limit leaf $\Lambda$, namely $\Lambda\subset\ov{\wt L}-\wt L$.
Let $J_{\Lambda}$ be the Jacobi function induced by taking the
inner product of $\partial_{\wt{\theta}}$ with the unit normal of $\Lambda$.

Just like in the previous discussion, by the nature of the convergence, $J_{\Lambda}\geq 0$
and therefore, since it is a Jacobi field, it is either positive or identically zero.
In the latter case, $\Lambda$ would be invariant with respect to $\wt{\theta}$-translations
and thus, by Remark~\ref{rmkbumps2}, $\Lambda$ cannot be contained in $\wt \Omega^*$. However,
since $\Lambda$ is contained in the closure of $\wt{ L}$,  this would imply that $\wt L$ is not
contained in  $\wt \Omega^*$, giving a contradiction. Thus,  $J_{\Lambda}$ must be positive
and therefore, $\Lambda$  is a Killing graph with respect to
$\partial_{\wt{\theta}}$. However, this implies that $\wt{L}$ cannot be a Killing graph with respect to
$\partial_{\wt{\theta}}$. This follows because if we fix a point $p$ in $\Lambda$ and
let $U_p\subset \Lambda$ be neighborhood of such point, then by the nature of the convergence,
$U_p$ is the limit of a sequence of disjoint domains $U_{p_n}$ in $\wt{L}$ where $p_n\in \wt{L}$
is a sequence of points converging to $p$ and $U_{p_n}\subset\wt L$ is a neighborhood of $p_n$.
While each domain $U_{p_n}$ is a Killing graph with respect to
$\partial_{\wt{\theta}}$, the convergence to $U_p$ implies that their union is not. This gives a
contradiction and proves that $\Lambda$  cannot be a Killing graph with respect to
$\partial_{\wt{\theta}}$. Since we have already shown that $\Lambda$  must be a Killing graph with respect to
$\partial_{\wt{\theta}}$, this gives a contradiction. Thus $\Lambda$ cannot exist and
each leaf $\wt L$ of $\wt{\cL}$ is properly embedded in $\wt \Omega^*$.
\end{proof}

Arguing similarly to the proof of the previous claim, it follows that a small perturbation of $\beta$, which we still denote by $\beta$ intersects $\Sigma_n$ and
$\wt{\cL}$ transversally in a finite number of points.  Note that  $\wt{\cL}$ is obtained as the limit of
$\Sigma_n$. Indeed, since $\Sigma_n$ separates $\B^+$ and $\B^-$ in $\wt{\Omega}^*$, the algebraic intersection number of $\beta$ and $\Sigma_n$ must be one, which implies that $\beta$ intersects $\Sigma_n$ in an odd number of points. Then $\beta$
intersects $\wt{\cL}$ in an odd number of points and  the claim below follows.

\begin{claim}
	The curve $\beta$ intersects $\wt{\cL}$ in an odd number of points.
\end{claim}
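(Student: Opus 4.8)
The plan is to prove that for every $n$ sufficiently large the geometric intersection number $|\beta\cap\Sigma_n|$ is simultaneously odd and equal to $|\beta\cap\wt\cL|$, which gives the claim immediately. First I would replace $\beta$ by a small perturbation with the same endpoints $p_\pm\in\B^\pm$ that is transverse to $\wt\cL$ and to each $\Sigma_n$; this is routine, since $p_\pm$ already lie off $\wt\cL$ and off every $\Sigma_n$. Because $\beta$ is compact, is transverse to $\wt\cL$, and the leaves of $\wt\cL$ are properly embedded in $\wt\Omega^*$ (Claim~\ref{propembleaf}) and have uniformly bounded second fundamental form, the set $\beta\cap\wt\cL=\{q_1,\dots,q_m\}$ is finite, with each $q_i$ lying on a single leaf $\wt L_i$ of $\wt\cL$, along which $\beta$ crosses transversally.

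Next I would localize. Choose pairwise disjoint small closed balls $\ov B_i$ with $q_i\in\Int(\ov B_i)$, with $\beta\cap\ov B_i$ equal to a single subarc $\beta_i$ meeting $\wt\cL$ only at $q_i$, and with $\beta\setminus\bigcup_i\Int(\ov B_i)$ disjoint from $\wt\cL$. The latter set is compact and hence at positive distance from the closed set $\wt\cL$, so it is disjoint from $\Sigma_n$ once $n$ is large, since $\Sigma_n$ converges to $\wt\cL$ uniformly on compact subsets. Thus $\beta\cap\Sigma_n=\bigsqcup_i(\beta_i\cap\Sigma_n)$ for $n$ large, and it remains to count $\beta_i\cap\Sigma_n$. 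Here the key input is Lemma~\ref{translation}: each $\Sigma_n$ is a $\wt\theta$-graph, so it meets every integral curve of $\partial_{\wt\theta}$ at most once; hence for $\ov B_i$ small and $n$ large, $\Sigma_n\cap\ov B_i$ is a single graphical sheet, and by the nature of the convergence these sheets converge in $C^1$ to $\wt L_i\cap\ov B_i$, which is therefore the only component of $\wt\cL\cap\ov B_i$. Since transversality is an open condition and $\beta_i$ crosses the $C^1$-limit $\wt L_i$ in exactly one point, $\beta_i$ meets the single sheet $\Sigma_n\cap\ov B_i$ transversally in exactly one point for $n$ large. Therefore $|\beta\cap\Sigma_n|=m$ for all $n$ large.

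To conclude, I would invoke the separation already recorded in the text together with the fact that the $\Delta_n$ exhaust $\wt\Omega^*$, so $\beta\subset\Delta_n$ for $n$ large: since $\Sigma_n$ separates $\B^+$ from $\B^-$ in $\wt\Omega^*$ and $p_\pm\in\B^\pm$ lie on opposite sides of $\Sigma_n$, the mod-$2$ intersection number of $\beta$ with $\Sigma_n$ equals $1$, i.e. $|\beta\cap\Sigma_n|$ is odd. Combining with the previous paragraph, $m=|\beta\cap\wt\cL|$ is odd.

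I expect the main obstacle to be the assertion that near each $q_i$ the lamination $\wt\cL$ reduces to the single leaf $\wt L_i$ and $\Sigma_n$ to a single sheet crossing $\beta_i$ once. In general a sequence of surfaces converging to a bounded-curvature lamination can develop arbitrarily many sheets near a limit leaf, and every extra sheet would reverse the parity of $|\beta_i\cap\Sigma_n|$; what saves the argument is precisely the $\wt\theta$-graph property of the $\Sigma_n$ from Lemma~\ref{translation}, which forbids two sheets over a common small coordinate disk and forces the local picture to be a single graph with a single-leaf limit. Along the way one should check the minor points that the $(\rho,t)$-projection of $q_i$ eventually lies in the interior of the projection domain $E_n$ of $\Sigma_n$, and that the $C^1$-convergence of these single graphs is genuine convergence — every subsequence having the same limit, namely the leaf of $\wt\cL$ through $q_i$ — so that no ambiguity arises.
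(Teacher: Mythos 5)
Your proposal is correct and follows essentially the same route as the paper: the separation of $\B^+$ from $\B^-$ by $\Sigma_n$ forces $|\beta\cap\Sigma_n|$ to be odd, and one then passes this parity to the limit lamination $\wt{\cL}$. The paper leaves the limiting step as a one-line assertion, whereas you carry it out carefully, correctly identifying the $\wt{\theta}$-graph property of Lemma~\ref{translation} as the reason no extra sheets of $\Sigma_n$ can appear near a point of $\beta\cap\wt{\cL}$ and flip the parity.
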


In particular $\beta$ intersects only a finite collection of leaves in $\wt{\cL}$ and we
let $\cF$ denote the non-empty  finite collection of leaves that intersect $\beta$.

\begin{definition}
Let $(\rho_1, \wt\theta_0, t_0)$ be a fixed point in $\wt\C_1$ and let
$\rho_2(\wt\theta_0, t_0)>\rho_1$ such that $(\rho_2(\wt\theta_0, t_0), \wt\theta_0, t_0)$ is in $\wt\C_2$.
Then we call  the arc  in $\wt \Omega$ given by
\begin{equation}\label{alpha}
(\rho_1+s(\rho_2-\rho_1), \wt\theta_0, t_0), \quad s\in[0,1].
\end{equation}
 the vertical line segment based at $(\rho_1, \wt\theta_0, t_0)$.
\end{definition}

\begin{claim} \label{odd}
There exists at least one leaf $\wt{L}_{\beta}$ in $\cF$ that intersects $\beta$ in an odd number
of points and the leaf $\wt{L}_{\beta}$ must intersect each vertical line segment at least once.
\end{claim}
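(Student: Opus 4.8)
The plan is to combine two facts already established: first, that $\beta$ meets $\wt{\cL}$ in an odd number of points (the previous claim), which forces at least one leaf of $\cF$ to carry an odd intersection number with $\beta$; and second, that every leaf of $\wt{\cL}$ is a Killing graph with respect to $\partial_{\wt\theta}$ (shown just before Claim~\ref{propembleaf}). The first observation is immediate: if every leaf in the finite collection $\cF$ intersected $\beta$ an even number of times, then the total $\#(\beta\cap\wt{\cL})$ would be even, contradicting the preceding claim. So we may select a leaf $\wt L_\beta\in\cF$ with $\#(\beta\cap\wt L_\beta)$ odd; in particular $\wt L_\beta$ is nonempty and, being obtained as a limit of the $\Sigma_n$, has its mean curvature vector consistently oriented.

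The substantive part is showing $\wt L_\beta$ meets every vertical line segment at least once. First I would observe that, since $\wt L_\beta$ is a $\partial_{\wt\theta}$-Killing graph (equivalently, a $\wt\theta$-graph) over a domain in the half-plane $\{\wt\theta=\wt\theta_0\}$-type slice, and since $\wt L_\beta$ is proper in $\wt\Omega^*$ by Claim~\ref{propembleaf}, its image under the $\wt\theta$-projection $\pi\colon\wt\Omega^*\to\{\theta=0\}$ is an open subset $D_\beta$ of the strip $\wh\p_0=\Omega\cap\p_0$. A vertical line segment based at $(\rho_1,\wt\theta_0,t_0)$ projects under $\pi$ to the point $(\rho,0,t_0)\mapsto$ the full horizontal segment $\{(\rho_1+s(\rho_2-\rho_1),t_0):s\in[0,1]\}$ in $\wh\p_0$; saying $\wt L_\beta$ meets that vertical segment is the same as saying this horizontal segment meets $D_\beta$. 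So the goal becomes: $D_\beta$ contains, for every $t_0\in\R$, a full horizontal slice of $\wh\p_0$ at height $t_0$, i.e. $\pi(\wt L_\beta)=\wh\p_0$.

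To prove $D_\beta$ is all of $\wh\p_0$ I would argue that $D_\beta$ is both open and closed in $\wh\p_0$, which is connected, so $D_\beta=\wh\p_0$ once we know it is nonempty (which holds since $\beta\cap\wt L_\beta\neq\emptyset$). Openness is automatic from $\wt L_\beta$ being a graph. For closedness, suppose $q_k\in D_\beta$ with $q_k\to q\in\wh\p_0$; lift to $\wt q_k\in\wt L_\beta$ with $\pi(\wt q_k)=q_k$. By the uniform curvature estimate the leaf is locally graphical with uniform size, so the only way the $\wt q_k$ could fail to converge inside $\wt L_\beta$ is if the $\wt\theta$-coordinate of $\wt q_k$ diverged. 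Here is where the barriers enter: because $\wt L_\beta$ carries an \emph{odd} (hence nonzero) intersection number with $\beta$, and $\beta$ runs between the infinite bumps $\B^+$ and $\B^-$, the leaf $\wt L_\beta$ must separate $\B^+$ from $\B^-$ in $\wt\Omega^*$, just as $\Sigma_n$ does; I would make this separation statement precise using the linking/intersection-number argument already used for $\Sigma_n$ and $\beta$. Consequently $\wt L_\beta$ cannot escape to $\wt\theta\to\pm\infty$ over any compact set of $\wh\p_0$ — if it did, a vertical segment near $q$ would lie entirely on one side of $\wt L_\beta$, violating the separation of the bumps which sit at the extreme $\wt\theta$-ranges $\{\wt\theta=\mp10\pi\}$ inside $\wt\Omega^*$. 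Hence the $\wt\theta$-coordinate of $\wt q_k$ stays bounded, a subsequence of $\wt q_k$ converges to some $\wt q\in\wt L_\beta$ (using properness from Claim~\ref{propembleaf}), and $q=\pi(\wt q)\in D_\beta$. This shows $D_\beta$ is closed, hence $D_\beta=\wh\p_0$, and therefore $\wt L_\beta$ meets every vertical line segment.

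The main obstacle I anticipate is pinning down rigorously the claim that \emph{odd intersection number with $\beta$ forces separation of the bumps} and then converting "separates $\B^+$ from $\B^-$" into the quantitative statement that the $\wt\theta$-coordinate of $\wt L_\beta$ stays bounded over compacta of $\wh\p_0$ — essentially a Jordan–Brouwer/linking argument in the solid-torus-cover geometry of $\wt\Omega^*$, carried out carefully so that the bumps $\B^\pm$ (which were deleted from $\wt\Omega^*$, leaving $\wt\Omega^*$ with boundary along them) genuinely obstruct the escape. Everything else is routine once that separation is in hand.
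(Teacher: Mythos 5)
Your parity argument for the existence of a leaf $\wt L_\beta\in\cF$ meeting $\beta$ in an odd number of points is exactly the one in the paper. The second half of your argument, however, contains two genuine errors. The first is the reduction itself: the vertical line segment based at $(\rho_1,\wt\theta_0,t_0)$ sits at the \emph{fixed} value $\wt\theta=\wt\theta_0$, so ``$\wt L_\beta$ meets the segment'' is \emph{not} equivalent to ``the horizontal segment at height $t_0$ meets $D_\beta=\pi(\wt L_\beta)$''; it is equivalent to the graph function of $\wt L_\beta$ attaining the value $\wt\theta_0$ somewhere over that horizontal segment. Two $\wt\theta$-graphs over overlapping domains need not intersect, so knowing that the projections overlap proves nothing about the intersection.

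The second error is the assertion $D_\beta=\wh\p_0$: it is false in general, and the closedness step is exactly where it breaks. The leaf is properly embedded in $\wt\Omega^*$, but its $\wt\theta$-coordinate is unbounded over compact subsets of $\wh\p_0$: as the subsequent claims of the paper show, $\Pi(\wt L_\beta)$ spirals into two limit $H$-catenoids $L_1, L_2$ which, as the paper explicitly remarks, need not coincide with $\C_1,\C_2$; correspondingly the graph function of $\wt L_\beta$ diverges to $\pm\infty$ on approach to $(L_1\cup L_2)\cap\p_0$, and $D_\beta$ is only the open substrip of $\wh\p_0$ between those traces. Your mechanism for excluding this behaviour --- that unbounded $\wt\theta$ over a compact set would violate the separation of the bumps ``at the extreme $\wt\theta$-ranges'' --- does not work: $\B^+$ and $\B^-$ are strips attached to $\wt\C_2$ and $\wt\C_1$ at the finite heights $\wt\theta\approx\mp10\pi$, they do not cap off $\wt\Omega^*$ in the $\wt\theta$-direction, and a leaf separating $\B^+$ from $\B^-$ is perfectly compatible with its $\wt\theta$-coordinate diverging near $\partial D_\beta$. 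The paper's proof avoids projections altogether: if some vertical segment (a path from $\wt\C_1$ to $\wt\C_2$) missed the properly embedded leaf $\wt L_\beta$, then elementary separation arguments place both endpoints of $\beta$ in a single component of the complement of $\wt L_\beta$, forcing the mod $2$ intersection number of $\beta$ with $\wt L_\beta$ to be zero and contradicting oddness. You should replace the projection argument with this separation/parity argument.
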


\begin{proof}
The existence of $\wt{L}_{\beta}$ follows because otherwise, if all the leaves in $\cF$
intersected $\beta$ in an even number of points, then  the number of points in the
intersection $\beta \cap \cF$ would be even. Given $\wt{L}_{\beta}$ a leaf in $\cF$
that intersects $\beta$ in an odd number of points, suppose there exists a vertical line
segment which does not intersect $\wt{L}_{\beta}$. Then since by Claim~\ref{propembleaf} $\wt{L}_{\beta}$
is properly embedded, using elementary separation arguments would give that  the number of points
of intersection in $\beta\cap \wt{L}_{\beta}$ must be zero mod 2, that is even, contradicting the previous statement.
 \end{proof}

Let $\Pi$ be the covering map defined in equation~\eqref{covermap} and let $\cP_H:=\Pi(\wt{L}_{\beta})$.
The previous discussion and the fact that $\Pi$ is a local diffeomorphism, implies
that $\cP_H$ is a stable complete $H$-surface embedded in $\Omega$. Indeed, $\cP_H$
is a graph over its $\theta$-projection to $\Int(\Omega)\cap \{(\rho,0,t)\mid
\rho>0, \, t\in \R\}$, which we denote by $\theta(\cP_H)$. Abusing the notation,
let $J_{\cP_H}$ be the Jacobi function induced by taking the
inner product of $\partial_{\theta}$ with the unit normal of $\cP_H$, then $J_{\cP_H}$
is positive. Finally, since the norm of the second fundamental form of $\cP_H$ is uniformly bounded,
standard compactness arguments imply that its closure $\ov\cP_H$ is an $H$-lamination $\cL$
of $\Omega$, see for instance~\cite{mr13}.

\begin{claim}
The closure of $\cP_H$ is an $H$-lamination of $\Omega$ consisting
of itself and two $H$-catenoids  $L_1, L_2\subset \Omega$ that form the limit set of  $\cP_H$.
\end{claim}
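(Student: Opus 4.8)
The plan is to analyze the limit lamination $\cL=\overline{\cP_H}$ of $\Omega$ and show that $\cL\setminus\cP_H$ consists of exactly two rotationally invariant $H$-catenoids. First I would observe that since $\cP_H$ is a $\theta$-graph with positive Jacobi function $J_{\cP_H}$ coming from $\partial_\theta$, any limit leaf $L$ of $\cL$ (i.e. $L\subset\overline{\cP_H}\setminus\cP_H$) inherits a non-negative Jacobi function from $\partial_\theta$, which must therefore be either strictly positive or identically zero. The crucial dichotomy is: if $J_L\equiv 0$ then $L$ is invariant under the rotations $\partial_\theta$, hence is a rotationally invariant $H$-surface in $\Omega$; if $J_L>0$ then $L$ is itself a $\theta$-graph, and then the same argument as in Claim~\ref{propembleaf} (a limit leaf of a limit leaf cannot be a Killing graph while being a limit of Killing graphs) produces a contradiction. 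So every limit leaf of $\cL$ must be rotationally invariant. Since $\cP_H\subset\Omega$ is non-proper (it has a limit leaf — this must be verified, but follows because $\wt L_\beta$ is a $\theta$-graph over a region whose $\theta$-projection is non-compact in a way that forces accumulation, together with the bump construction forcing $\cP_H$ to wrap around infinitely), there is at least one such rotationally invariant limit leaf.

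Next I would identify the rotationally invariant limit leaves. A rotationally invariant $H$-surface properly embedded in $\Omega$ with mean curvature vector consistent with the limiting orientation must be one of the $H$-catenoids $\C^H_d$ with $d\in(d_1,d_2)$, or a piece of $E_H^t$ or $-E_H^t$ — but the latter are graphs over $\BH^2_t$ and cannot stay inside $\Omega$ away from the catenoid boundary, and in any case have the wrong asymptotic behavior; the $\wt\theta$-coordinate being unbounded on $\wt L_\beta$ rules out leaves coming from the $\wt S_n^\pm$ strips. So each rotationally invariant limit leaf is some $\C^H_{d}$ with $d_1<d<d_2$. Using Remark~\ref{rmkbumps}, every such catenoid $\C^H_d$, $d\in(d_1,d_2)$, meets $Y^+\cup Y^-$, hence its lift meets $\B^+\cup\B^-$; but $\wt L_\beta\subset\wt\Omega^*$ was constructed to be disjoint from $\B^\pm$, so $\cP_H$ is disjoint from all $\C^H_d$ with $d\in(d_1,d_2)$. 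This forces the limit catenoids to be "extremal": the leaves of $\cL$ other than $\cP_H$ can only accumulate from one side, and a foliation/ordering argument on the family $\{\C^H_d\}$ shows there are exactly two of them, one on each side of $\cP_H$, call them $L_1=\C^H_{a}$ and $L_2=\C^H_{b}$ with $d_1\le a<b\le d_2$; by the mean curvature comparison principle (Proposition~\ref{max}) applied along $\C_1,\C_2$ one excludes $a=d_1$ or $b=d_2$ unless they coincide with the boundary catenoids, which is also ruled out since $\cP_H\subset\Int(\Omega)$ would then touch $\partial\Omega$.

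More precisely, to pin down that there are exactly two limit catenoids and that they bound $\cP_H$, I would argue as follows. Let $\mathcal{A}=\{d\in[d_1,d_2] : \C^H_d\cap\overline{\cP_H}\ne\emptyset\}$. Since $\cP_H$ is connected and non-compact with $\wt\theta$ unbounded, and since each $\C^H_d$ is rotationally invariant, a point of $\C^H_d$ in $\overline{\cP_H}$ that is not in $\cP_H$ (impossible, as $\cP_H$ is disjoint from every $\C^H_d$, $d\in(d_1,d_2)$) forces $\C^H_d\subset\overline{\cP_H}\setminus\cP_H$ to be an entire limit leaf (a rotationally invariant leaf touching a limit leaf tangentially with equal mean curvature vector equals it, by Proposition~\ref{max}, or is disjoint). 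Thus $\mathcal{A}$ is the set of $d$ with $\C^H_d\subset\cL\setminus\cP_H$, a closed subset of $(d_1,d_2)$. Set $a=\inf\mathcal A$ and $b=\sup\mathcal A$; since $\cP_H$ lies in $\Int\Omega$ and is non-proper it accumulates somewhere, so $\mathcal A\ne\emptyset$ and $\cP_H$ is trapped in the closed region between $\C^H_a$ and $\C^H_b$. If $a<b$ were attained with $a<d<b$ and $\C^H_d\notin\cL$, then $\cP_H$ would be disjoint from $\C^H_d$ yet accumulate on both $\C^H_a$ and $\C^H_b$, contradicting that $\cP_H$ is connected and a $\theta$-graph confined between $\C^H_a$ and $\C^H_d$ on one hand and $\C^H_d$ and $\C^H_b$ on the other — so in fact $\mathcal A=\{a\}$ or $\mathcal A=\{a,b\}$; a final check using the positivity of $J_{\cP_H}$ and that $\cP_H$ must separate its two ends (coming from the $\B^\pm$ bumps wrapping) rules out $\mathcal A$ being a singleton, giving exactly $L_1=\C^H_a$, $L_2=\C^H_b$.

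The main obstacle I anticipate is making rigorous the claim that $\cP_H$ is genuinely non-proper in $\Omega$ — equivalently that $\cF\ne\{\wt L_\beta\}$ does not already hold with $\wt L_\beta$ proper — and more precisely controlling from which sides $\cP_H$ accumulates so as to get exactly two limit leaves rather than one. This requires carefully exploiting the infinite bumps $\B^\pm$: because $\Sigma_n$ is forced by linking with $\beta$ to separate $\B^+$ from $\B^-$ while being a $\wt\theta$-graph, the limit leaf $\wt L_\beta$ must "spiral", i.e. its $\wt\theta$-coordinate is unbounded above and below, which under $\Pi$ forces $\cP_H$ to wind around the axis infinitely and hence to accumulate; and the winding toward $+\wt\theta$ versus $-\wt\theta$ yields the two distinct limit catenoids. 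Formalizing this spiraling/winding behavior — rather than just citing the bumps heuristically — is where the real work lies, and I would do it via a monotonicity argument for the $\rho$-coordinate along $\wt L_\beta$ as $\wt\theta\to\pm\infty$, combined with the disjointness $\cP_H\cap\C^H_d=\emptyset$ for $d\in(d_1,d_2)$ established via Remark~\ref{rmkbumps}.
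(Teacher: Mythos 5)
Your overall skeleton (limit leaves carry a non-negative Jacobi function from $\partial_\theta$, hence are either Killing graphs or rotationally invariant; the Killing-graph alternative is excluded as in Claim~\ref{propembleaf}; the rotational leaves are $H$-catenoids; there are exactly two and they sandwich $\cP_H$) matches the paper's. But the step you yourself flag as ``where the real work lies'' --- actually producing the two limit catenoids --- is precisely the content of the paper's proof, and it is not heuristic there: by Claim~\ref{odd}, $\wt L_\beta$ meets \emph{every} vertical line segment, in particular each translate $T_{2\pi i}(\wt\gamma)$, $i\in\ZZ$, of a fixed segment. The intersection points project under $\Pi$ to points $(r_i,\theta_0,t_0)$ on one compact segment in $\Omega$, with $r_i$ monotone and bounded; hence $r_i\to r(2)$, $r_{i+1}-r_i\to 0$ forces $J_{\cP_H}(p_i)\to 0$, and the leaf through the limit point has vanishing Jacobi function, so it is rotational. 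Running $i\to-\infty$ gives the second catenoid. Your sketch (``monotonicity of $\rho$ along $\wt L_\beta$ as $\wt\theta\to\pm\infty$'') points in this direction, but leaving it unexecuted means the existence of even one limit leaf is not established in your write-up.

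Two further steps are genuinely wrong or unsupported. First, your deduction that $\cP_H\cap\C^H_d=\emptyset$ for all $d\in(d_1,d_2)$ from Remark~\ref{rmkbumps} is a non sequitur: the remark shows that the full ($\wt\theta$-invariant) preimage $\Pi^{-1}(\C^H_d)$ must meet $\B^+\cup\B^-$ and hence cannot be \emph{contained} in $\wt\Omega^*$ (this is how Remark~\ref{rmkbumps2} is used), but most of $\Pi^{-1}(\C^H_d)$ still lies in $\wt\Omega^*$, so nothing prevents the non-invariant graph $\wt L_\beta$ from crossing it there. Indeed the disjointness you assert is incompatible with the final picture: since every integral circle of $\partial_\theta$ in $\Omega_g$ meets $\cP_H$, any intermediate catenoid lying in $\Omega_g$ \emph{must} intersect $\cP_H$. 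Second, your ``foliation/ordering argument on the family $\{\C^H_d\}$'' to get exactly two limit leaves assumes a nestedness of the catenoid family that the paper does not have --- Lemma~\ref{disjointlem} gives disjointness only for widely separated parameters, and the foliation statement is explicitly posed as an open conjecture in the Appendix. The paper avoids this entirely: any putative third rotational leaf $L_3$ in $\Omega_g$ would separate $\Omega_g$, contradicting that the connected surface $\cP_H$ is disjoint from $L_3$ yet accumulates on both $L_1$ and $L_2$. You should replace your ordering argument with this separation argument.
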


\begin{remark}
 Note that these two $H$-catenoids are not necessarily the ones which determine $\partial \Omega$.
\end{remark}

\begin{proof}
Given $(\rho_1, \wt\theta_0, t_0)\in \wt\cC_1$, let $\wt{\gamma}$ be the fixed vertical
line segment in $\wt{\Omega}$ based at $(\rho_1, \wt\theta_0, t_0)$, let $\wt{p}_0$ be a
point in the intersection $\wt{L}_\beta\cap\wt{\gamma}$ (recall that by Claim~\ref{odd} such
intersection is not empty) and let  $p_0=\Pi(\wt{p}_0)\in \Pi(\wt{\gamma})\cap \cP_H$. Then,
by Claim~\ref{odd}, for any $i\in\mathbb N$, the vertical line segment $T_{2\pi i}(\wt{\gamma})$ intersects
$\wt{L}_\beta$ in at least a point $\wt{p}_i$, and $\wt{p}_{i+1}$ is above $\wt{p}_i$, where $T$
is the translation defined in equation~\eqref{transl}. Namely,
$\wt{p}_0=(r_0, \wt{\theta}_0, t_0)$, $\wt{p}_i=(r_i, \wt{\theta}_0+2\pi i, t_0)$
and  $r_i<r_{i+1}<\rho_2(\wt\theta_0, t_0)$. The point $\wt{p_i}\in \wt{L}_\beta $
corresponds to the point $p_i= \Pi(\wt{p}_i)=(r_i, \wt{\theta}_0\, \text{mod}\, 2\pi  , t_0)\in \cP_H$.
Let $r(2):=\lim_{i\to \infty }r_i$ then $r(2)\leq \rho_2(\wt \theta_0,t_0)$  and note that
since $\lim_{i\to\infty}(r_{i+1}-r_i)=0$, then the value of the Jacobi function $J_{\cP_H}$
at $ p_i$ must be going to zero as $i$ goes to infinity. Clearly,
the point $Q:=(r(2), \wt \theta_0 \, \text{mod}\, 2\pi, t_0)\in \Omega$ is
in the closure of $\cP_H$, that is  $\cL$. Let $L_2$ be the leaf of $\cL$ containing $Q$.
By the previous discussion $J_{L_2}(Q)=0$. Since by the nature of the convergence,
either $J_{L_2}$ is positive or $L_2$ is rotational, then $L_2$ is rotational, namely an $H$-catenoid.

Arguing similarly but considering the intersection of $\wt{L}_\beta$ with the vertical
line segments $T_{-2\pi i}(\wt{\gamma})$, $i\in \mathbb N$, one obtains
another $H$-catenoid $L_1$, different from $L_2$, in the lamination $\cL$.
This shows that the closure of $\cP_H$ contains the two $H$-catenoids $L_1$ and $L_2$.

Let $\Omega_g$ be the rotationally invariant, connected region of $\Omega-[L_1\cup L_2]$
whose boundary contains $L_1\cup L_2$. Note that since $\cP_H$ is connected and $L_1\cup L_2$
is contained in its closure, then $\cP_H\subset\Omega_g$. It remains to show that
$\cL=\cP_H\cup L_1\cup L_2$, i.e. $\ov{\cP}_H-\cP_H=L_1\cup L_2$.  If $\ov{\cP}_H-\cP_H\neq L_1\cup L_2$
then there would be another leaf $L_3\in \cL\cap \Omega_g$ and by previous argument, $L_3$
would be an $H$-catenoid. Thus $L_3$ would separate $\Omega_g$ into two regions, contradicting
that fact that $\cP_H$ is connected and $L_1\cup L_2$ are contained in its closure.
This finishes the proof of the claim.
\end{proof}

Note that by the previous claim, $\cP_H$ is properly embedded in $\Omega_g$.

\begin{claim}
The $H$-surface $\cP_H$ is simply-connected and every
integral curve of $\partial_\theta$ that lies in $\Omega_g$ intersects $\cP_H$ in exactly one point.
\end{claim}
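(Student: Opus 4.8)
The plan is to reduce both assertions to the single statement that the $\theta$-projection $\theta(\cP_H)$ equals the full cross-section of $\Omega_g$. To prepare for this, I would first record three structural facts. Since $\Omega_g$ is rotationally invariant, writing $R:=\Omega_g\cap\{(\rho,0,t)\mid \rho>0,\, t\in\R\}$ for its cross-section we have $\Omega_g=\{(\rho,\theta,t)\mid (\rho,0,t)\in R\}$, so an integral curve of $\partial_\theta$ lies in $\Omega_g$ precisely when it is the circle through a point of $R$; in particular $R$ is connected, being a continuous image of $\Omega_g$. Next, because $L_1$ and $L_2$ are disjoint embedded catenoids about the $t$-axis, at each height $t$ they are round circles whose radii $\rho^{(1)}(t)$ and $\rho^{(2)}(t)$ depend continuously on $t$ and are never equal (equal radii at some height would put the whole circle in $L_1\cap L_2$); relabeling if necessary so that $\rho^{(1)}(t)<\rho^{(2)}(t)$ for all $t$, we get $R=\{(\rho,0,t)\mid \rho^{(1)}(t)<\rho<\rho^{(2)}(t),\ t\in\R\}$, which is homeomorphic to $\R\times(0,1)$ and hence contractible. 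Finally, since $\cP_H$ is a $\theta$-graph over $\theta(\cP_H)$ and $\cP_H\subset\Omega_g$, the map forgetting the $\theta$-coordinate restricts to a diffeomorphism of $\cP_H$ onto $\theta(\cP_H)\subseteq R$; and as $\cP_H$ is a complete surface without boundary, $\theta(\cP_H)$ is a boundaryless surface sitting inside the plane $\{(\rho,0,t)\mid \rho>0\}$, hence open in that plane by invariance of domain.

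The key step is to prove $\theta(\cP_H)=R$, which I would do by contradiction using that $\cP_H$ is properly embedded in $\Omega_g$. If $\theta(\cP_H)\neq R$, then since $R$ is connected and $\theta(\cP_H)$ is a nonempty subset of $R$ that is open in the ambient plane, $\theta(\cP_H)$ is not closed in $R$; so I may choose points $(\rho_k,0,t_k)\in\theta(\cP_H)$ converging to a point $(\rho_*,0,t_*)\in R\setminus\theta(\cP_H)$. Writing $\cP_H$ as the graph $\theta=f(\rho,t)$ over $\theta(\cP_H)$, the associated surface points $(\rho_k,f(\rho_k,t_k),t_k)\in\cP_H$ have convergent $\rho$- and $t$-coordinates and $\theta$-coordinates in the compact circle $\R/2\pi\mathbb{Z}$, so after passing to a subsequence they converge to a point $(\rho_*,\theta_*,t_*)$, which lies in $\Omega_g$ because $(\rho_*,0,t_*)\in R$. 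Thus $(\rho_*,\theta_*,t_*)\in\ov{\cP_H}\cap\Omega_g=\cP_H$, the last equality being properness of $\cP_H$ in $\Omega_g$; projecting, $(\rho_*,0,t_*)\in\theta(\cP_H)$, a contradiction. Hence $\theta(\cP_H)=R$.

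Once $\theta(\cP_H)=R$ is known, both conclusions are immediate. First, $\cP_H$ is diffeomorphic to $R$, which is contractible, so $\cP_H$ is simply-connected. Second, any integral curve of $\partial_\theta$ lying in $\Omega_g$ is the circle $\{(\rho_0,\theta,t_0)\mid \theta\in\R/2\pi\mathbb{Z}\}$ through some $(\rho_0,0,t_0)\in R=\theta(\cP_H)$, and since $\cP_H=\{\theta=f(\rho,t)\}$ is a $\theta$-graph over $R$ this circle meets $\cP_H$ in exactly one point, namely $(\rho_0,f(\rho_0,t_0),t_0)$.

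I expect the only genuinely delicate point to be the equality $\theta(\cP_H)=R$; the whole argument hinges on the limit point $(\rho_*,\theta_*,t_*)$ landing in the \emph{open} region $\Omega_g$, so that properness of $\cP_H$ there can be invoked, and this is precisely why one works in $\Omega_g$ rather than in $\Omega$ --- in $\Omega$ the limit point could a priori escape onto $L_1\cup L_2=\ov{\cP_H}-\cP_H$. Everything else is bookkeeping with facts already established: that $\cP_H$ is a $\theta$-graph, that $\ov{\cP_H}=\cP_H\cup L_1\cup L_2$ with $L_1,L_2$ disjoint catenoids about the $t$-axis, and that $\cP_H$ is properly embedded in $\Omega_g$.
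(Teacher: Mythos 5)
Your proposal is correct and follows essentially the same route as the paper: the paper also reduces both conclusions to showing that the $\theta$-projection of $\cP_H$ fills the whole cross-section $D_g=\Int(\Omega_g)\cap\{(\rho,0,t)\mid \rho>0,\ t\in\R\}$, deducing this from the properness of the projection $\Omega_g\to D_g$ together with the proper embeddedness of $\cP_H$ in $\Omega_g$, and then uses the graph property and the simple-connectivity of $D_g$ exactly as you do. Your open--closed--connected argument with the convergent sequence is just an explicit unpacking of the paper's one-line assertion that a proper submersion carries the properly embedded graph onto all of $D_g$.
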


\begin{proof}
   Let $D_g:=\Int(\Omega_g)\cap \{(\rho,0,t)\mid
\rho>0, \, t\in \R\}$, then $\cP_H$  is a graph over its $\theta$-projection to $D_g$,
that is $\theta (\cP_H)$. Since $\theta\colon \Omega_g\to D_g$ is a proper submersion
and $\cP_H$ is properly embedded in $\Omega_g$, then $\theta (\cP_H)=D_g$, which implies that every
integral curve of $\partial_\theta$ that lies in $\Omega_g$ intersects $\cP_H$ in exactly
one point. Moreover, since $D_g$ is simply-connected, this gives that $\cP_H$ is also
simply-connected. This finishes the proof of the claim.
\end{proof}

From this claim, it clearly follows that $\Omega_g$ is foliated by  $H$-surfaces,
where the leaves of this foliation are
$L_1$, $L_2$ and  the rotated images $\cP_H ({\theta})$ of
$\cP_H$ around the $t$-axis by angles $\theta\in [0,2\pi)$. The existence of the examples
$\Sigma_H$ in the statement of Theorem~\ref{main} can easily be proven by using $\cP_H$. We set $\Sigma_H=\cP_H$, and $C_i=L_i$ for $i=1,2$.
This finishes the proof of Theorem~\ref{main}.

\section{Appendix: Disjoint $H$-catenoids}

In this section, we will show the existence of disjoint $H$-catenoids in $\BHH$. In particular, we will
prove Lemma~\ref{disjointlem}. Given $H\in (0,\ff)$ and $d\in [-2H,\infty)$,
recall that $\eta_d=\cosh^{-1}(\frac{2dH+\sqrt{1-4H^2+d^2}}{1-4H^2})$ and that $\lambda_d\colon[\eta_d,\infty)\to [0,\infty)$
is the function defined as follows.
\begin{equation}\label{Hcat-eqn}
\lambda_d(\rho)= \int ^{\rho}_{\eta_d} \frac{d+2H\cosh r}{\sqrt{\sinh^2 r - ( d+2H\cosh r)^2}}dr.
\end{equation}

Recall that $\lambda_d(\rho)$ is a monotone increasing function with $\lim_{\rho\to\infty}\lambda_d(\rho)= \infty$
and that  $\lambda'_d(\eta_d)=\infty$ when $d\in (-2H,\infty)$. The $H$-catenoid $\C^H_d$, $ d\in (-2H,\infty)$, is
obtained by rotating a generating curve $\wh{\lambda}_d(\rho)$ about the $t$-axis. The generating curve $\wh{\lambda}_d $
is obtained by doubling the curve $(\rho, 0, \lambda_d(\rho))$, $\rho\in[\eta_d,\infty)$, with its
reflection $(\rho, 0, -\lambda_d(\rho))$, $\rho\in[\eta_d,\infty)$.

Finally, recall that $b_{d}(t):=\lambda_d^{-1}(t)$ for $t\geq 0$, hence $b_d(0)=\eta_{d}$, and that
abusing the notation $b_d(t):=b_d(-t)$ for $t\leq 0$.

\newtheorem*{disjointlem}{Lemma~\ref{disjointlem}}
\begin{disjointlem}
[Disjoint $H$-catenoids] Given $d_1>2$ there exist $d_0>d_1$ and $\delta_0>0$ such that for
any $d_2\in [d_0,\infty)$ and $t>0$ then
\[
\inf_{t\in \mathbb R}( b_{d_2}(t)-b_{d_1}(t))\geq \delta_0.
\]
In particular, the corresponding $H$-catenoids are disjoint, i.e., $\C^H_{d_1}\cap\C^H_{d_2}=\emptyset$.

Moreover, $b_{d_2}(t)-b_{d_1}(t)$ is decreasing for $t>0$ and increasing for $t<0$. In particular,
\[
\sup_{t\in \mathbb R}( b_{d_2}(t)-b_{d_1}(t))=b_{d_2}(0)-b_{d_1}(0)= \eta_{d_2}-\eta_{d_1}.
\]
\end{disjointlem}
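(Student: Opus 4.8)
\emph{Proof proposal.} The plan is to work entirely with the profile curves in the flat half–plane $\{(\rho,t)\mid\rho\ge 0\}$, viewing the upper half of $\C^H_d$ as the graph $t=\lambda_d(\rho)$, $\rho\ge\eta_d$, extended evenly across $t=0$, and to reduce every comparison to a single first integral. Set $c_d(\rho):=\frac{d+2H\cosh\rho}{\sinh\rho}$; a direct computation shows $\lambda'_d(\rho)=F\big(c_d(\rho)\big)$ with $F(x)=x/\sqrt{1-x^2}$, so $b_d(t)=\lambda_d^{-1}(t)$ satisfies $b'_d(t)=1/F\big(c_d(b_d(t))\big)$. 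Two one–line computations give $c'_d(\rho)=-\frac{2H+d\cosh\rho}{\sinh^2\rho}<0$ and $\partial_d c_d(\rho)=1/\sinh\rho>0$, while $c_d(\eta_d)=1$ and $c_d(\rho)\downarrow 2H$ as $\rho\to\infty$. Hence $c_d$, and therefore $\lambda'_d$, is strictly decreasing in $\rho$ and strictly increasing in $d$, and $\lambda'_d(\rho)\downarrow g_\infty:=\frac{2H}{\sqrt{1-4H^2}}$; in particular $\lambda'_d(\rho)>g_\infty$ for every finite $\rho$, and $\lambda_d(\rho)=g_\infty\rho+\beta_d+o(1)$ as $\rho\to\infty$ for a finite constant $\beta_d$.

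First I would establish disjointness together with the uniform estimate on $\delta_0$. For $d_2>d_1$ and $\rho\ge\eta_{d_2}$ put $D(\rho):=\lambda_{d_2}(\rho)-\lambda_{d_1}(\rho)$. Then $D(\eta_{d_2})=-\lambda_{d_1}(\eta_{d_2})<0$ while $D'(\rho)=\lambda'_{d_2}(\rho)-\lambda'_{d_1}(\rho)>0$, so $D$ is strictly increasing; since $c_{d_2}-c_{d_1}=\frac{d_2-d_1}{\sinh\rho}$ decays exponentially and $F'$ stays bounded away from the neck, $\lambda'_{d_2}-\lambda'_{d_1}$ is integrable at $+\infty$ and $D(\infty):=\lim_{\rho\to\infty}D(\rho)=-\lambda_{d_1}(\eta_{d_2})+\int_{\eta_{d_2}}^{\infty}\big(\lambda'_{d_2}-\lambda'_{d_1}\big)\,d\rho=\beta_{d_2}-\beta_{d_1}$ is finite. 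The quantitative heart is that $D(\infty)\to-\infty$ as $d_2\to\infty$: one has $\lambda_{d_1}(\eta_{d_2})\ge g_\infty(\eta_{d_2}-\eta_{d_1})\to\infty$ since $\eta_{d_2}\to\infty$, whereas $\int_{\eta_{d_2}}^{\infty}(\lambda'_{d_2}-\lambda'_{d_1})\,d\rho$ stays bounded — splitting it at $\eta_{d_2}+1$, the tail is $\le F'(c^\ast)(d_2-d_1)\int_{\eta_{d_2}+1}^{\infty}\frac{d\rho}{\sinh\rho}=O(1)$ (because $c_{d_2}\le c^\ast<1$ there uniformly in $d_2$ and $\int_{\eta_{d_2}+1}^{\infty}\frac{d\rho}{\sinh\rho}=O(e^{-\eta_{d_2}})=O(1/d_2)$), and the near–neck piece is $\le\lambda_{d_2}(\eta_{d_2}+1)$, which is bounded uniformly in $d_2$ via the substitution $\rho=\eta_{d_2}+\sigma$ and the elementary bound $\sinh^2\rho-(d_2+2H\cosh\rho)^2\ge 2(\cosh\eta_{d_2}-2H\sinh\eta_{d_2})\sinh\eta_{d_2}\,\sigma$ on $[\eta_{d_2},\eta_{d_2}+1]$. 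Choose $d_0$ so that $\sup_{d_2\ge d_0}D(\infty)<0$. Then $D(\rho)<D(\infty)<0$ for every finite $\rho\ge\eta_{d_2}$, i.e. $\lambda_{d_1}(\rho)>\lambda_{d_2}(\rho)$ on $[\eta_{d_2},\infty)$; as the profiles can therefore never meet and $b_{d_1}(0)=\eta_{d_1}<\eta_{d_2}=b_{d_2}(0)$, we get $b_{d_1}(t)<b_{d_2}(t)$ for all $t$, hence $\C^H_{d_1}\cap\C^H_{d_2}=\emptyset$.

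Next I would prove the monotonicity (the "Moreover" clause). Let $\phi_d(t)\in(\arcsin 2H,\tfrac\pi2]$ be the inclination of the profile curve to the horizontal at height $t$, so that $\sin\phi_d(t)=c_d(b_d(t))$ and $b'_d(t)=\cot\phi_d(t)$. Differentiating $\sin\phi_d=c_d\circ b_d$ in $t$ and simplifying with the identity $\frac{2H+d\cosh\rho}{\sinh\rho\,(d+2H\cosh\rho)}=\coth\rho-\frac{2H}{c_d(\rho)}$ yields the first–order system
\[
b'_d(t)=\cot\phi_d(t),\qquad \phi'_d(t)=\frac{2H}{\sin\phi_d(t)}-\coth\!\big(b_d(t)\big),\qquad b_d(0)=\eta_d,\ \ \phi_d(0)=\tfrac\pi2 .
\]
Put $w(t):=\phi_{d_2}(t)-\phi_{d_1}(t)$, so $w(0)=0$ and $w'(0^+)=\coth\eta_{d_1}-\coth\eta_{d_2}>0$ (as $\eta_{d_1}<\eta_{d_2}$ and $\coth$ is decreasing), whence $w>0$ on some $(0,\varepsilon)$. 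If $w$ had a first zero $t^\ast>0$ then $\sin\phi_{d_2}(t^\ast)=\sin\phi_{d_1}(t^\ast)$, the $2H/\sin$ terms cancel in $w'(t^\ast)$, and $w'(t^\ast)=\coth\!\big(b_{d_1}(t^\ast)\big)-\coth\!\big(b_{d_2}(t^\ast)\big)>0$ because $b_{d_1}(t^\ast)<b_{d_2}(t^\ast)$ by the previous step — contradicting $w'(t^\ast)\le 0$. Hence $\phi_{d_2}(t)>\phi_{d_1}(t)$ for all $t>0$, so $b'_{d_2}(t)=\cot\phi_{d_2}(t)<\cot\phi_{d_1}(t)=b'_{d_1}(t)$, i.e. $b_{d_2}-b_{d_1}$ is strictly decreasing on $(0,\infty)$ and, by $b_d(-t)=b_d(t)$, strictly increasing on $(-\infty,0)$. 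Therefore $\sup_{\mathbb R}(b_{d_2}-b_{d_1})=\eta_{d_2}-\eta_{d_1}$, and $\inf_{\mathbb R}(b_{d_2}-b_{d_1})=\lim_{t\to\infty}(b_{d_2}(t)-b_{d_1}(t))=-D(\infty)/g_\infty$ (from $b_d(t)=g_\infty^{-1}(t-\beta_d)+o(1)$), which is $\ge\delta_0:=-g_\infty^{-1}\sup_{d_2\ge d_0}D(\infty)>0$ uniformly for $d_2\ge d_0$; this is exactly the assertion of the lemma.

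The step I expect to be the main obstacle is the quantitative asymptotic estimate in the second paragraph — controlling $\lambda_{d_2}(\eta_{d_2}+1)$ and the tail $\int_{\eta_{d_2}+1}^{\infty}(\lambda'_{d_2}-\lambda'_{d_1})$ uniformly as $d_2\to\infty$ — which is delicate precisely because the integrand defining $\lambda_{d_2}$ has an integrable square–root singularity at the \emph{moving} neck $\eta_{d_2}$ while $\eta_{d_2}\to\infty$; this is the "rather lengthy computation" referred to in the statement. By contrast, once the first integral $\phi'_d=\frac{2H}{\sin\phi_d}-\coth b_d$ is available, both the disjointness and the monotonicity reduce to short maximum–principle–type arguments for scalar functions.
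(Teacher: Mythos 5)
Your proposal is correct, but it takes a genuinely different route from the paper's. The paper splits $\lambda_d=f_d+J_d$, computes the dominant part in closed form by a chain of substitutions, $f_d(\rho)=\frac{2H}{\sqrt{1-4H^2}}\cosh^{-1}\bigl(\frac{(1-4H^2)\cosh\rho-2dH}{\sqrt{d^2+1-4H^2}}\bigr)$, and bounds the remainder uniformly by $J_d\le 2\pi\sqrt{1-2H}$; the divergence of the asymptotic separation as $d_2\to\infty$ is then read off from the $\ln\sqrt{(d_2^2+1-4H^2)/(d_1^2+1-4H^2)}$ term, and disjointness follows from an at-most-one-crossing argument much like yours. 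You avoid all explicit integration by working with the slope variable $c_d(\rho)=(d+2H\cosh\rho)/\sinh\rho$ and extracting the divergence from $\lambda_{d_1}(\eta_{d_2})\ge g_\infty(\eta_{d_2}-\eta_{d_1})\to\infty$ (with $g_\infty=2H/\sqrt{1-4H^2}$) together with a uniform bound on $\int_{\eta_{d_2}}^\infty(\lambda'_{d_2}-\lambda'_{d_1})$; both routes yield separation of order $\ln d_2$, and your estimates check out --- the near-neck bound uses the convexity of $\sinh^2\rho-(d+2H\cosh\rho)^2$, which holds, and the asserted uniform bound $c_{d_2}\le c^\ast<1$ on $[\eta_{d_2}+1,\infty)$ follows from $c'_d\le 2H-c_d$ and Gronwall, giving $c^\ast=2H+(1-2H)e^{-1}$; you should spell that last step out. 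Where your approach genuinely buys something is the ``Moreover'' clause: the paper's chain $b_1'(t)>1/\lambda_1'(b_2(t))>b_2'(t)$ appeals to convexity of $\lambda_1$, but your own identity $\lambda_d'=F(c_d)$ with $c_d$ strictly decreasing shows $\lambda_d$ is strictly \emph{concave} (its slope falls from $+\infty$ at the neck to $g_\infty$), so the first inequality in that chain is reversed as written; your phase-angle system $b'=\cot\phi$, $\phi'=2H/\sin\phi-\coth b$ with the no-first-crossing argument for $\phi_{d_2}-\phi_{d_1}$ (correctly feeding in the previously established ordering $b_{d_1}<b_{d_2}$) supplies a sound proof of the monotonicity and is the cleanest part of your write-up.
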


\begin{proof}
We begin by introducing the following notations that will be used for the computations in the proof of this lemma,
\[
c:=\cosh r=\frac{e^r+e^{-r}}{2},\, s:=\sinh r=\frac{e^r-e^{-r}}{2}.
\]
 Recall that $c^2-s^2=1$ and $c-s=e^{-r}$. Using these notations,
\begin{equation}
\lambda_d(\rho)= \int ^{\rho}_{\eta_d} \frac{d+2H\cosh r}{\sqrt{\sinh^2 r - ( d+2H\cosh r)^2}}\ dr
\end{equation}
can be rewritten as
\begin{equation}\label{Hcat-eqn}
\lambda_d(\rho)= \int ^{\rho}_{\eta_d} \frac{d+2H(s+e^{-r})}{\sqrt{s^2 - ( d+2Hc)^2}}\ dr=f_d(\rho)+J_d(\rho),
\end{equation}
where
\[
f_d(\rho)=\int ^{\rho}_{\eta_d} \frac{2Hs}{\sqrt{s^2 - ( d+2Hc)^2}} \ dr \ \ \mbox{and} \ \ J_d(\rho)=
\int ^{\rho}_{\eta_d} \frac{d+2He^{-r}}{\sqrt{s^2 - ( d+2Hc)^2}} \ dr
\]

First, by using a series of substitutions, we will get an explicit description of $f_d(\rho)$. Then,
we will show that for $d>2$, $J_d(\rho)$ is bounded independently  of $\rho$ and $d$.

\begin{claim}\label{fdrho}
\begin{equation}
f_d(\rho)=\frac{2H }{\sqrt{1-4H^2}}\cosh^{-1} \left(\frac {(1-4H^2)\cosh \rho- 2dH }{\sqrt{d^2+1-4H^2}} \right).
\end{equation}
\end{claim}

\begin{remark}
After finding $f_d(\rho)$, we  used Wolfram Alpha to compute the derivative of $f_d(\rho)$ and verify our claim. For the sake of completeness, we give a proof.
\end{remark}

\begin{proof}[Proof of Claim~\ref{fdrho}]
The proof is a computation with requires several integrations by substitution. Consider
\[
\int \frac{2Hs}{\sqrt{s^2 - ( d+2Hc)^2}} \ dr
\]
By using the fact that $s^2=c^2-1$ and applying the substitution $\{u=c,du=\frac{dc}{dr}dr=sdr\}$ we obtain that
\[
\int \frac{2Hs}{\sqrt{s^2 - ( d+2Hc)^2}} \ dr=\int  \frac{2H}{\sqrt{u^2-1 - ( d+2Hu)^2}}\ du.
\]
Note that
\[
\begin{split}
&u^2-1 - ( d+2Hu)^2=u^2-1-(d^2+4dHu+4H^2u^2)\\
&=(1-4H^2)u^2-4dHu-d^2-1\\
&=(1-4H^2)(u^2-\frac{4dH}{1-4H^2}u+\frac{4d^2H^2}{(1-4H^2)^2})-\frac{4d^2H^2}{1-4H^2}-d^2-1\\
&=(1-4H^2)[(u-\frac{2dH}{(1-4H^2)})^2-(\frac{4d^2H^2}{(1-4H^2)^2}+\frac{d^2+1}{1-4H^2})]\\
&=(1-4H^2)[(u-\frac{2dH}{(1-4H^2)})^2-(\frac{4d^2H^2+(1-4H^2)(d^2+1)}{(1-4H^2)^2})]\\
&=(1-4H^2)[(u-\frac{2dH}{(1-4H^2)})^2-(\frac{d^2+1-4H^2}{(1-4H^2)^2})].
\end{split}
\]
Therefore, by applying a second substitution, $\{w=u-\frac{2dH}{(1-4H^2)}, dw=du\}$, and letting
$a^2=(\frac{d^2+1-4H^2}{(1-4H^2)^2})$ we get that
\[
\int  \frac{2H}{\sqrt{u^2-1 - ( d+2Hu)^2}}\ du=\int  \frac{2H}{\sqrt{1-4H^2}\sqrt{w^2-a^2}}\ dw
\]

By using the fact that $\sec^2x-1=\tan^2x$ and applying a third substitution, $\{w=a\sec t, dw=a\sec t\tan t dt\}$, we obtain that
\[
\begin{split}
\int  \frac{2Ha\sec t\tan t}{\sqrt{1-4H^2}\sqrt{a^2\sec^2 t-a^2}}&\ dt =\int  \frac{2H\sec t}{\sqrt{1-4H^2}}\ dt\\
&=\frac{2H }{\sqrt{1-4H^2}}\ln|\sec t+\tan t|
\end{split}
\]

Therefore
\[
\begin{split}
\int  \frac{2H}{\sqrt{1-4H^2}\sqrt{w^2-a^2}}&\ dw =\frac{2H }{\sqrt{1-4H^2}}\ln|\frac wa+\sqrt{\frac{w^2}{a^2}-1}|\\
&=\frac{2H }{\sqrt{1-4H^2}}\cosh^{-1}( \frac wa)
\end{split}
\]

Since $w=u-\frac{2dH}{(1-4H^2)}$ then
\[
\begin{split}
\int  \frac{2H}{\sqrt{u^2-1 - ( d+2Hu)^2}}&\ du =\frac{2H }{\sqrt{1-4H^2}}\cosh^{-1}\left ( \frac {u-\frac{2dH}{(1-4H^2)}}{a}\right)\\
&=\frac{2H }{\sqrt{1-4H^2}}\cosh^{-1}\left ( \frac {u-\frac{2dH}{(1-4H^2)}}{\frac{\sqrt{d^2+1-4H^2}}{(1-4H^2)}}\right)\\
&=\frac{2H }{\sqrt{1-4H^2}}\cosh^{-1}\left ( \frac {(1-4H^2)u- 2dH }{\sqrt{d^2+1-4H^2}} \right).
\end{split}
\]

Finally, since $u=\cosh r$
\[
\begin{split}
\int ^{\rho}_{\eta_d} \frac{2Hs}{\sqrt{s^2 -
( d+2Hc)^2}}&=\frac{2H }{\sqrt{1-4H^2}}\cosh^{-1}\left( \frac {(1-4H^2)\cosh r- 2dH }{\sqrt{d^2+1-4H^2}} \right)\bigg|_{\eta_d}^\rho\\
&=
\frac{2H }{\sqrt{1-4H^2}}(\cosh^{-1}\left ( \frac {(1-4H^2)\cosh \rho- 2dH }{\sqrt{d^2+1-4H^2}} \right )\\
&-\cosh^{-1}\left ( \frac {(1-4H^2)\cosh \eta_d- 2dH }{\sqrt{d^2+1-4H^2}} \right))
\end{split}
\]
Recall that $\eta_d=\cosh^{-1} (\frac{2dH+\sqrt{1-4H^2+d^2}}{1-4H^2})$ and thus
\[
\frac {(1-4H^2)\cosh \eta_d- 2dH }{\sqrt{d^2+1-4H^2}}=
\frac {(1-4H^2)(\frac{2dH+\sqrt{1-4H^2+d^2}}{1-4H^2})- 2dH }{\sqrt{d^2+1-4H^2}}=1.
\]
This implies that
\[
f_d(\rho)=\frac{2H }{\sqrt{1-4H^2}}\cosh^{-1} \left(\frac {(1-4H^2)\cosh \rho- 2dH }{\sqrt{d^2+1-4H^2}} \right).
\]
\end{proof}

By Claim~\ref{fdrho} we have that
\[
\begin{split}
f_d(\rho)&=\frac{2H }{\sqrt{1-4H^2}}(\cosh^{-1} \frac {(1-4H^2)\cosh \rho- 2dH }{\sqrt{d^2+1-4H^2}} )\\
 &= \frac{2H }{\sqrt{1-4H^2}}(\rho +\ln \frac {1-4H^2 }{\sqrt{d^2+1-4H^2}}) +g_d(\rho),
\end{split}
\]
where $\lim_{\rho\to\infty}g_d(\rho)=0$.

Recall that
$\lambda_d(\rho)= f_d(\rho)+J_d(\rho)$
where
\[
J_d(\rho)=
\int ^{\rho}_{\eta_d} \frac{d+2He^{-r}}{\sqrt{s^2 - ( d+2Hc)^2}}\ dr
=\int ^{\rho}_{\eta_d} \frac{d+2He^{-r}}{\sqrt{c^2-1 - ( d+2Hc)^2}}\ dr.
\]

\begin{claim}\label{jdrho}
\[
\sup_{d\in(2,\infty),\rho\in (\eta_d,\infty)}J_d(\rho)\leq \pi\sqrt{1-2H}.
\]
\end{claim}

\begin{proof}[Proof of Claim~\ref{jdrho}]
Let
\[
\alpha=\dfrac{2dH+\sqrt{1-4H^2+d^2}}{1-4H^2}\,\text{ and }\,\beta=\dfrac{2dH-\sqrt{1-4H^2+d^2}}{1-4H^2}
\]
 be the roots of $c^2-1 - ( d+2Hc)^2$, i.e.
\[
\begin{split}
c^2-1 - ( d+2Hc)^2&= (1-4H^2)(c^2-\frac{4dH}{1-4H^2}c-\frac{1+d^2}{1-4H^2})\\
&=(1-4H^2)(c-\alpha)(c-\beta).
\end{split}
\]
Note that $\alpha=\cosh{\eta_d}$ and that as $H\in(0,\frac{1}{2})$, $\beta<0<\alpha$. Furthermore, $2He^{-r}<2H<1<d$. Thus we have,
\[
\begin{split}
J_d(\rho)&=\int ^{\rho}_{\eta_d} \frac{d+2He^{-r}}{\sqrt{1-4H^2}\sqrt{(c-\alpha)(c-\beta)}}dr \\
&<  \frac{2d}{\sqrt{1-4H^2}} \int_{\eta_d}^\infty \frac{dr}{\sqrt{(c-\alpha)(c-\beta)}}\\
&< \frac{2d}{\sqrt{1-4H^2}\sqrt{\alpha-\beta}} \int_{\eta_d}^\infty \frac{dr}{\sqrt{c-\alpha}},
\end{split}
\]
where the last inequality holds because for $r>\eta_d$, $\cosh{r}>\alpha$ and thus
$\sqrt{\alpha-\beta}< \sqrt{c-\alpha}$. Notice that
$\alpha-\beta= \frac{2\sqrt{1-4H^2+d^2}}{1-4H^2}>\frac{2d}{1-4H^2}$. Therefore
\[
\frac{2d}{\sqrt{1-4H^2}\sqrt{\alpha-\beta}}<\frac{2d}{\sqrt{1-4H^2}}\frac{\sqrt{1-4H^2}}{\sqrt{2d}}=\sqrt{2d}
\]
and
\[
J_d(\rho)< \sqrt{2d}
 \int_{\eta_d}^\infty \frac{dr}{\sqrt{c-\alpha}}.
\]

Applying the substitution $\{ u=c-\alpha, du=sdr=\sqrt{(u+\alpha)^2-1}dr\}$, we obtain that
\begin{equation}
  \int_{\eta_d}^\infty \frac{dr}{\sqrt{c-\alpha}}=  \int_0^\infty \frac{du}{\sqrt{u}\sqrt{(u+\alpha)^2-1}}
\end{equation}
Let $\omega=\alpha-1$. Note that since $d\geq 1$ then $\alpha>1$ and we have that
$(u+\alpha)^2-1> (u+\omega)^2$ as $u>0$. This gives that
\[
\int_0^\infty \frac{du}{\sqrt{u}\sqrt{(u+\alpha)^2-1}}< \int_0^\infty \frac{du}{\sqrt{u}(u+\omega)}
\]

Applying the substitution $\{ v=\sqrt{u}, dv=\dfrac{du}{2\sqrt{u}}\}$ we get
\[
\int_0^\infty \frac{du}{\sqrt{u}(u+\omega)}= \int_0^\infty \frac{2dv}{v^2+\omega}=
  \ \frac{2}{\sqrt{\omega}}\arctan{\frac{w}{\sqrt{\omega}}}\bigg |_0^\infty  <\dfrac{\pi}{\sqrt{\omega}}
\]
and thus
\[
J_d(\rho)< \sqrt{\frac{2d}{\omega}} \pi.
\]

Note that \[
\begin{split}
\omega=\alpha-1&= \frac{2dH+\sqrt{1-4H^2+d^2}}{1-4H^2}-1\\
&>\frac{(1+2H)d}{1-4H^2}-1=\frac{d}{1-2H}-1.
\end{split}
\]
Since $d>2$, we have $2\omega>\dfrac{d}{1-2H}$ and $\dfrac{d}{\omega}<2(1-2H)$. Then
$
\sqrt{\dfrac{2d}{\omega}}<2\sqrt{1-2H}.
$

Finally, this gives that
\[
J_d(\rho)<2\pi\sqrt{1-2H}
\]
independently on $d>2$ and $\rho>\eta_d$. This finishes the proof of the claim.

\end{proof}

Using Claims~\ref{fdrho} and~\ref{jdrho}, we can now prove the next claim.

\begin{claim}\label{separation}
Given $d_2>d_1>2$ there exists $T\in \mathbb R$ such for any  $t>T$, we have that
\[
\begin{split}
\frac{2H }{\sqrt{1-4H^2}}&(\lambda_{d_2}^{-1}(t)-\lambda_{d_1}^{-1}(t))>
\\
> &\frac 12 \ln \sqrt{\frac {d_2^2+1-4H^2 }{d_1^2+1-4H^2}}-2\pi\sqrt{1-2H}.
\end{split}
\]
\end{claim}
\begin{proof}[Proof of Claim~\ref{separation}]
Recall that $\lambda_d(\rho)=f_d(\rho)+J_d(\rho)$ and that by Claims~\ref{fdrho} and~\ref{jdrho} we have that
\begin{equation}
f_d(\rho)= \frac{2H }{\sqrt{1-4H^2}}(\rho +\ln \frac {1-4H^2 }{\sqrt{d^2+1-4H^2}}) +g_d(\rho),
\end{equation}
where $\lim_{\rho\to\infty}g_d(\rho)=0$,
and that
\begin{equation}
\sup_{d\in(2,\infty),\rho\in (\eta_d,\infty)}J_d(\rho)\leq 2\pi\sqrt{1-2H} .
\end{equation}
Let $\rho_i(t):=\lambda_{d_i}^{-1}(t)$, $i=1,2$. Using this notation, since
$t=\lambda_1(\rho_1(t))=\lambda_2(\rho_2(t))$ we obtain that
\[
\begin{split}
&0=\lambda_2(\rho_2(t))-\lambda_1(\rho_1(t))\\
&=f_{d_2}(\rho_2(t))+J_{d_2}(\rho_2(t)) -f_{d_1}(\rho_1(t))-J_{d_1}(\rho_1(t))\\
&=\frac{2H }{\sqrt{1-4H^2}}(\rho_2(t) +\ln \frac {1-4H^2 }{\sqrt{d_2^2+1-4H^2}}) +g_{d_2}(\rho_2(t))+J_{d_2}(\rho_2(t))\\
&-\frac{2H }{\sqrt{1-4H^2}}(\rho_1(t) -\ln \frac {1-4H^2 }{\sqrt{d_1^2+1-4H^2}}) -g_{d_1}(\rho_1(t))-J_{d_1}(\rho_1(t))
\end{split}
\]

Recall that $\lim_{t\to\infty}\rho_i(t)=\infty$, $i=1,2$, therefore given $\ve>0$ there
exists $T_\ve\in \mathbb R$ such that for any $t>T_\ve$, $|g_{d_i}(\rho_i(t))|\leq\ve$. Taking
\[
4\ve<\ln \sqrt{\frac {d_2^2+1-4H^2 }{d_1^2+1-4H^2}}
\]
 for $t>T_\ve$ we get that
\[
\begin{split}
\frac{2H }{\sqrt{1-4H^2}}&(\rho_2(t)-\rho_1(t))>
\\
>&\ln \sqrt{\frac {d_2^2+1-4H^2 }{d_1^2+1-4H^2}}+J_{d_1}(\rho_1(t))-J_{d_2}(\rho_2(t))-2\ve\\
>&\frac 12 \ln \sqrt{\frac {d_2^2+1-4H^2 }{d_1^2+1-4H^2}}+J_{d_1}(\rho_1(t))-J_{d_2}(\rho_2(t)).
\end{split}
\]
Notice that    $J_{d_1}(\rho_1(t))>0$ and that Claim~\ref{jdrho} gives that
\[
\sup_{\rho\in (\eta_{d_2},\infty)}J_{d_2}(\rho)\leq 2\pi\sqrt{1-2H}.
\]
Therefore
\[
\begin{split}
\frac{2H }{\sqrt{1-4H^2}}&(\rho_2(t)-\rho_1(t))>
\\
> &\frac 12 \ln \sqrt{\frac {d_2^2+1-4H^2 }{d_1^2+1-4H^2}}-2\pi\sqrt{1-2H}.
\end{split}
\]
This finishes the proof of the claim.
\end{proof}

We can now use Claim~\ref{separation} to finish the proof of the lemma. Given $d_1>2$ fix $d_0>d_1$ such that
\[
\begin{split}
\frac{\sqrt{1-4H^2}}{4H} \left (\ln \sqrt{\frac {d_0^2+1-4H^2 }{d_1^2+1-4H^2}}-4\pi\sqrt{1-2H}\right)=1.
\end{split}
\]
Then, by Claim~\ref{separation}, given $d_2\geq d_0$ there exists $T>0$ such that
$\lambda_{d_2}^{-1}(t)-\lambda_{d_1}^{-1}(t)>1$ for any $t>T$. Notice that since for any
$\rho\in(\eta_2,\infty)$, $\lambda'_{d_2}(\rho)>\lambda'_{d_1}(\rho)$,
then there exists at most one $t_0>0$ such that $\lambda_{d_2}^{-1}(t_0)-\lambda_{d_1}^{-1}(t_0)=0$.
Therefore,  since there exists $T>0$ such that
$\lambda_{d_2}^{-1}(t)-\lambda_{d_1}^{-1}(t)>1$ for any $t>T$ and
$\lambda_{d_2}^{-1}(0)-\lambda_{d_1}^{-1}(0)=\eta_{d_2}-\eta_{d_1}>0$, this implies that there
exists a constant $\delta (d_2)>0$ such that for any $t>0$,
\[
\lambda_{d_2}^{-1}(t)-\lambda_{d_1}^{-1}(t)>\delta (d_2).
\]

A priori it could happen that $\lim_{d_2\to\infty}\delta (d_2)=0$.
The fact that  $\lim_{d_2\to\infty}\delta (d_2)>0$ follows easy by noticing that by applying
Claim~\ref{separation} and using  the same arguments as in the previous paragraph there exists
$d_3> d_0$ such that for any $d\geq d_3$ and $t>0$,
\[
\lambda_{d}^{-1}(t)-\lambda_{d_0}^{-1}(t)>0.
\]
Therefore,
for any $d\geq d_3$ and $t>0$,
\[
\lambda_{d}^{-1}(t)-\lambda_{d_1}^{-1}(t)> \lambda_{d_0}^{-1}(t)-\lambda_{d_1}^{-1}(t)>\delta (d_0)
\]
which implies that
\[
\lim_{d_2\to\infty}\delta (d_2)\geq \delta (d_0)>0.
\]

Setting $\delta_0=\inf_{d\in[d_0,\infty)}\delta(d_2)>0$ gives that
\[
\inf_{t\in \mathbb R_{\geq 0}}( \lambda_{d_2}^{-1}(t)-\lambda_{d_1}^{-1}(t))\geq \delta_0.
\]
By definition of $b_d(t)$ then
\[
\inf_{t\in \mathbb R}( b_{d_2}(t)-b_{d_1}(t))=
\inf_{t\in \mathbb R_{\geq 0}}( \lambda_{d_2}^{-1}(t)-\lambda_{d_1}^{-1}(t))\geq \delta_0.
\]
It remains to prove that $b_2(t)-b_1(t)$ is decreasing for $t>0$ and increasing for $t<0$.
By definition of $b_d(t)$, it suffices to show that $b_2(t)-b_1(t)$ is decreasing for $t>0$.
We are going to show $\frac{d}{dt}(b_2(t)-b_1(t))<0$ when $t>0$.

By definition of $b_i$, for $t>0$ we have that $\lambda_i(b_i(t))=t$ and thus $b_i'(t)=\frac{1}{\lambda_i'(b_i(t))}$.
By definition of $\lambda_d(t)$ for $t>0$ the following holds,
\[
b_1'(t)=\frac{1}{\lambda_1'(b_1(t))}>\frac{1}{\lambda_1'(b_2(t))}>\frac{1}{\lambda_2'(b_2(t))}=b_2'(t).
\]
The first inequality is due to the convexity of the function $\lambda_1(t)$ and the second inequality is
due to the fact that $\lambda_1'(\rho)<\lambda_2'(\rho)$ for any $\rho>\eta_2$. This proves that
$\frac{d}{dt}(b_2(t)-b_1(t))=b_2'(t)-b_1'(t)<0$ for $t>0$ and finishes the  proof of the claim.
\end{proof}

Note that if $d$ is sufficiently close to $-2H$ then $\C^H_d$ must be unstable. This follows
because as $d$ approaches $-2H$, the norm of the second fundamental form of $\C^H_d$ becomes
arbitrarily large at points that approach the ``origin'' of $\BHH$ and a simple rescaling
argument gives that a sequence of subdomains of $\C^H_d$ converge to a catenoid, which is
an unstable minimal surface. This observation, together with our previous lemma suggests the
following conjecture.

\vspace{.2cm}

\noindent {\bf Conjecture:} Given $H\in(0,\frac12)$ there exists $d_H>-2H$ such that the
following holds. For any $d>d'>d_H$, $\C^H_{d}\cap\C^H_{d'}=\emptyset$, and the family
$\{\C^H_d \mid d\in [d_H,\infty)\}$ gives a foliation of the closure of the non-simply-connected
component of $\BHH-\C^H_{d_H}$. The $H$-catenoid $\C^H_d$ is unstable if $d\in(-2H, d_H)$ and
stable if $d\in(d_H,\infty)$. The $H$-catenoid $\C^H_{d_H}$ is a stable-unstable catenoid.

\bibliographystyle{plain}
\bibliography{bill}

\begin{thebibliography}{10}

\bibitem{alr1}
H.~Alencar and H.~Rosenberg.
\newblock Some remarks on the existence of hypersurfaces of constant mean
  curvature with a given boundary, or asymptotic boundary in hyperbolic space.
\newblock {\em Bull. Sci. Math.}, 121(1):61--69, 1997.
\newblock MR1431100, Zbl 0882.53044.

\bibitem{cm35}
T.~H. Colding and W.~P. Minicozzi~II.
\newblock The {C}alabi-{Y}au conjectures for embedded surfaces.
\newblock {\em Ann. of Math.}, 167:211--243, 2008.
\newblock MR2373154, Zbl 1142.53012.

\bibitem{cmt1}
B.~Coskunuzer, W.~H. Meeks~III, and G.~Tinaglia.
\newblock Non-properly embedded {H}-planes in $\mathbb{H}^3$.
\newblock {\em J. Differential}, 105(3):405--425, 2017.
\newblock Preprint at http://arxiv.org/pdf/1503.04641.pdf.

\bibitem{gu2}
R.~Gulliver.
\newblock The {P}lateau problem for surfaces of prescribed mean curvature in a
  {R}iemannian manifold.
\newblock {\em J. Differential Geom.}, 8:317--330, 1973.
\newblock MR0341260, Zbl 0275.53033.

\bibitem{mr13}
W.~H. Meeks~III and H.~Rosenberg.
\newblock The minimal lamination closure theorem.
\newblock {\em Duke Math. Journal}, 133(3):467--497, 2006.
\newblock MR2228460, Zbl 1098.53007.

\bibitem{mt11}
W.~H. Meeks~III and G.~Tinaglia.
\newblock Embedded {C}alabi-{Y}au problem in hyperbolic $3$-manifolds.
\newblock Work in progress.

\bibitem{mt15}
W.~H. Meeks~III and G.~Tinaglia.
\newblock The geometry of constant mean curvature surfaces in $\mathbb{R}^3$.
\newblock Preprint available at http://arxiv.org/pdf/1609.08032v1.pdf.

\bibitem{nsest1}
B.~Nelli, R.~Sa Earp, W.~Santos, and E.~Toubiana.
\newblock Uniqueness of {$H$}-surfaces in $\mathbb{H}^2\times \mathbb{R}$,
  $|{H}|\leq 1/2$, with boundary one or two parallel horizontal circles.
\newblock {\em Ann. Global Anal. Geom.}, 33(4):307--321, 2008.

\bibitem{ner1}
B.~Nelli and H.~Rosenberg.
\newblock Global properties of constant mean curvature surfaces in {$\Bbb H\sp
  2\times\Bbb R$}.
\newblock {\em Pacific J. Math.}, 226(1):137--152, 2006.
\newblock MR2247859, Zbl 1121.53042.

\bibitem{rodt1}
M.~M. Rodr\'{\i}guez and G.~Tinaglia.
\newblock Non-proper complete minimal surfaces embedded in
  $\mathbb{H}^2\times\mathbb{R}$.
\newblock {\em Int Math Res Notices}, 2015(12), 2015.
\newblock Preprint at http://arxiv.org/pdf/1211.5692.

\bibitem{rst1}
H.~Rosenberg, R.~Souam, and E.~Toubiana.
\newblock General curvature estimates for stable {$H$}-surfaces in
  $3$-manifolds and applications.
\newblock {\em J. Differential Geom.}, 84(3):623--648, 2010.
\newblock MR2669367, Zbl 1198.53062.

\bibitem{ton1}
Y.~Tonegawa.
\newblock Existence and regularity of constant mean curvature hypersurfaces in
  hyperbolic space.
\newblock {\em Math. Z.}, 221:591--615, 1996.
\newblock MR0486522, Zbl 0353.53032.

\end{thebibliography}
\end{document}